\newcommand{\sprod}[2]{\left\langle #1, #2\right\rangle}
\newcommand{\interior}[1]{\text{int }#1}
\renewcommand{\Im}{\text{Im }}
\newcommand{\dom}{\text{dom }}
\newcommand{\conv}{\text{conv }}
\renewcommand{\SS}{\mathcal{S}}
\begin{document}

\begin{center}
{\huge\textbf{\itshape Uneven Splitting of\\ Ham Sandwiches}}
\medskip

{\large {\itshape Felix Breuer}}

\end{center}
\bigskip

\begin{abstract} Let $\mu_1,\ldots,\mu_n$ be continuous probability measures on $\RR^n$ and $\alpha_1,\ldots,\linebreak \alpha_n\in[0,1]$. When does there exist an oriented hyperplane $H$ such that the positive half-space $H^+$ has $\mu_i(H^+)=\alpha_i$ for all $i\in[n]$? It is well known that such a hyperplane does not exist in general. The famous ham sandwich theorem  states that if $\alpha_i=\frac{1}{2}$ for all $i$, then such a hyperplane always exists.

In this paper we give sufficient criteria for the existence of $H$ for general $\alpha_i\in[0,1]$. Let $f_1,\ldots,f_n:S^{n-1}\rar\RR^n$ denote auxiliary functions with the property that for all $i$ the unique hyperplane $H_i$ with normal $v$ that contains the point $f_i(v)$ has $\mu_i(H_i^+)=\alpha_i$. Our main result is that if $\Im f_1,\ldots,\Im f_n$ are bounded and can be separated by hyperplanes, then there exists a hyperplane $H$ with $\mu_i(H^+)=\alpha_i$ for all $i$. This gives rise to several corollaries, for instance if the supports of $\mu_1,\ldots,\mu_n$ are bounded and can be separated by hyperplanes, then $H$ exists for any choice of $\alpha_1,\ldots,\alpha_n\in [0,1]$. We also obtain results that can be applied if the supports of $\mu_1,\ldots,\mu_n$ overlap.
\end{abstract}

\section{Introduction}
The ham sandwich theorem states that any ham sandwich in $\RR^3$ consisting of ham, cheese and bread can be split by a hyperplane such that ham, cheese and bread are simultaneously split in half. In general it reads as follows.

\begin{theorem}
Let $\mu_1,\ldots,\mu_n$ be continuous probability measures on $\RR^n$. Then there exists an oriented hyperplane $H$ such that 
$$\mu_i(H^+)=\frac{1}{2}\text{ for all $i\in\{1,\ldots,n\}=:[n]$,}$$
where $H^+$ denotes the positive half-space corresponding to $H$.
\end{theorem}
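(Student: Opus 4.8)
The plan is to deduce the theorem from the Borsuk--Ulam theorem, in the classical way. The first step is to set up a dictionary between oriented hyperplanes in $\RR^n$ and points of the sphere $S^n\subseteq\RR^{n+1}$. Embed $\RR^n$ into $\RR^{n+1}$ via $x\mapsto(x,1)$, and to a unit vector $(a,b)\in S^n$, with $a\in\RR^n$ and $b\in\RR$, associate the linear hyperplane of $\RR^{n+1}$ with normal $(a,b)$; intersecting it with the affine copy $\set{(x,1):x\in\RR^n}$ yields the oriented hyperplane
\[
H_{(a,b)}=\set{x\in\RR^n:\sprod{a}{x}+b=0},\qquad H^+_{(a,b)}=\set{x\in\RR^n:\sprod{a}{x}+b\ge 0}.
\]
For the two poles $(0,\pm1)$ this degenerates, giving $H^+_{(0,1)}=\RR^n$ and $H^+_{(0,-1)}=\emptyset$, which is exactly the limiting behaviour as the hyperplane escapes to infinity; every genuine oriented hyperplane of $\RR^n$ arises from some non-polar point, and the antipode $(-a,-b)$ yields the same hyperplane with reversed orientation.

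The second step is to define $f:S^n\to\RR^n$ by $f_i(a,b)=\mu_i\big(H^+_{(a,b)}\big)$ for $i\in[n]$. Two facts are needed. First, $f$ is continuous: since each $\mu_i$ is a continuous measure it assigns measure zero to every hyperplane, so a small change of $(a,b)$ changes each $\mu_i(H^+_{(a,b)})$ only slightly. Second, $f$ enjoys an antipodal symmetry: because $H^+_{(-a,-b)}$ and $H^-_{(a,b)}$ agree up to their common boundary hyperplane, which is $\mu_i$-null, we get $f_i(-a,-b)=1-f_i(a,b)$ for every $i$.

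The third step is to put $g:S^n\to\RR^n$, $g(p)=f(p)-(\tfrac12,\dots,\tfrac12)$, where I write $\tfrac12$ for $\frac12$. By the antipodal symmetry of $f$ one has $g(-p)=-g(p)$, so $g$ is a continuous odd map $S^n\to\RR^n$. The Borsuk--Ulam theorem, in the form ``every continuous odd map $S^n\to\RR^n$ vanishes somewhere'', then produces a point $p=(a,b)\in S^n$ with $g(p)=0$, i.e.\ $\mu_i(H^+_{(a,b)})=\tfrac12$ for all $i\in[n]$. This $p$ cannot be a pole, since at a pole some coordinate of $f$ equals $0$ or $1$; hence $H:=H_{(a,b)}$ is a genuine oriented hyperplane, and it is the one we want.

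So the whole argument is routine apart from the invocation of Borsuk--Ulam, which is the real engine. The one technical point that needs care is the continuity of $f$: this can fail for more general (for instance lower-dimensional) measures, and it is precisely what the hypothesis that the $\mu_i$ are continuous measures is there to guarantee.
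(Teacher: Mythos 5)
Your proof is correct: it is the classical Borsuk--Ulam argument (the one the paper attributes to Banach and to Stone--Tukey and cites rather than reproves), with the one-point compactification of the space of oriented hyperplanes via $S^n$, the antipodal identity $f_i(-p)=1-f_i(p)$, and the odd map $g=f-(\tfrac12,\dots,\tfrac12)$ all handled properly. Since the paper states this theorem only as background and gives no proof of its own, there is nothing to compare against; your argument stands on its own, the only step one might flesh out being the continuity of $(a,b)\mapsto\mu_i(H^+_{(a,b)})$ via dominated convergence using that hyperplanes are $\mu_i$-null.
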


The ham sandwich theorem was posed as a problem by Steinhaus in the Scottish Book \cite[Problem 123]{sb}. A proof for $n=3$ was given by Banach \cite{s-38} using the Borsuk-Ulam theorem. This proof was generalized to arbitrary $n$ by Stone and Tukey in \cite{st-42}. For an account of the early history of the ham sandwich theorem we refer to \cite{bz-04}. For a reference on the Borsuk-Ulam theorem, including its application to the ham sandwich theorem and an overview of related results, we recommend \cite{m-ubut}.

The question we want to address is this: Let $\mu_1,\ldots,\mu_n$ be continuous probability measures on $\RR^n$ and $\alpha_1,\ldots,\alpha_n\in[0,1]$. When does there exist a hyperplane $H$ such that $\mu_i(H^+)=\alpha_i$  for all $i\in[n]$?

It is well known that such a hyperplane does not exist in general \cite{st-42}, \cite{m-ubut}. We give examples for this in section \ref{sec:examples}. Interestingly the equivariant methods used to obtain equipartition results cannot be easily applied to obtain partitions according to other ratios. 

There is much literature on the ham sandwich theorem and related partitioning results, see \cite{m-ubut} for an overview and e.g.\ \cite{bm-02},\cite{bks-00},\cite{dol-92},\cite{grunb-60},\cite{gh-05},\cite{rado-46},\cite{ramos-96},\cite{zv-2001},\cite{zv-1990}. However, most results assert the existence of equipartitions. Some sources in which the existence of partitions into parts of different measure are shown, are the following.

A $k$-fan in $\RR^2$ is a set of $k$ rays emanating from a point $p$. The regions in between two adjacent rays are called sectors $\sigma_1,\ldots,\sigma_k$. In the case of continuous measures, a $k$-fan $(\alpha_1,\ldots,\alpha_k)$-partitions measures $\mu_1,\ldots,\mu_n$ if $\mu_i(\sigma_j)=\alpha_j$ for all $j$ and $i$. Bárány and Matoušek \cite{bm-02} show that any two measures on $\RR^2$ can be $\alpha$-partitioned by a $2$-fan for all $\alpha$ and by a $4$-fan for $\alpha=(\frac{2}{5},\frac{1}{5},\frac{1}{5},\frac{1}{5})$.

Vrećica and Živaljević \cite{zv-2001} show that for any continuous measure, any $\alpha$ with $\sum_i\alpha_i =1$, any non-degenerate simplex $\Delta$ in $\RR^n$ and any point $a\in\text{int }\Delta$ there is a vector $v\in\RR^n$ such that the cones $R_i$ with apex $a$ generated by the facets of $\Delta$ satisfy $\mu(R_i+v)=\alpha_i$ for all $i$.

Rado \cite{rado-46} shows that for any measure $\mu$ on $\RR^n$ there exists a point $x$ such that $\mu(H^+)\geq\frac{1}{n+1}$ for any half-space $H^+$ with $x\in H^+$. Živaljević and Vrećica \cite{zv-1990} give a unification of Rado's theorem with the ham sandwich theorem: For measures $\mu_1,\ldots,\mu_k$ on $\RR^n$ there is a $(k-1)$-dimensional affine subspace $A$ such that $\mu(H^+)\geq\frac{1}{n-k+2}$ for any half-space $H^+$ with $A\subset H^+$. See also \cite{dol-92}.

Stojmenović \cite{stoj1} gives an algorithm for finding a hyperplane bisecting the volume of three convex polygons in $\RR^3$, that can be separated by hyperplanes (see section \ref{sec:separation}). Stojmenović remarks that this algorithm works in any dimension, for other measures (not only volume) and other proportions of splitting. In the journal version \cite{stoj2} of that article, Stojmenović remarks that extensions to other measures and proportions are possible, but no mention is made of dimensions $n > 3$.

The purpose of this article is to give a sufficient criterion for the existence of a hyperplane splitting continuous probability measures $\mu_1,\ldots,\mu_n$ in $\RR^n$ according to prescribed ratios $\alpha_1,\ldots,\alpha_n\in[0,1]$. The idea is this: Given a ratio $\alpha_i\in[0,1]$ and a measure $\mu_i$ we define an auxiliary function $f_i$ that selects for any normal vector $v\in S^{n-1}$ a point $f(v)\in H$ from a hyperplane $H$ with $\mu_i(H^+)=\alpha_i$. We do not assume the continuity of the $f_i$, only that their image is bounded. Our main result is: if the sets $\Im f_1,\ldots,\Im f_n$ can be separated by hyperplanes, then there exists a hyperplane $H$ with $\mu_i(H^+)=\alpha_i$ for all $i$. One corollary is that if the supports of the measures $\mu_i$ can be separated, we can find a splitting hyperplane for \emph{any} ratios $\alpha_i$. Interesting about these results is that we are able to show the existence of the desired hyperplane despite the absence of symmetry that is usually afforded by the ratio $\frac{1}{2}$. The main tool we use is the theorem of Poincaré-Miranda.

The organization of this paper is as follows. In section \ref{sec:definitions} we give basic definitions. In section \ref{sec:f_i} we introduce the auxiliary functions $f_i$. Examples showing the non-existence of uneven splittings in general are given in section \ref{sec:examples} which motivate the concept of separability defined in section \ref{sec:separation}. We then have everything in place to state our main result (theorem \ref{thm:main-result})in section \ref{sec:main-result}. Before turning to the proof in section \ref{sec:proof} we take a look at the main tool, the Poincaré-Miranda theorem, in section \ref{sec:topology}. There are many ways of choosing the auxiliary  functions $f_i$ and each gives rise to a different corollary of theorem \ref{thm:main-result}. For the purposes of theorem \ref{thm:main-result} only the sets $\Im f_i$ are of interest. In section \ref{sec:applications} we give a method of guaranteeing the existence of functions $f_i$ while controlling their image. We apply this method in section \ref{sec:two-lines} to generalize an easy corollary of the classical ham sandwich theorem on the partitioning of one mass by two lines. We conclude the paper in section \ref{sec:central-spheres} by suggesting a canonical choice for the functions $f_i$. 

\section{Definitions}
\label{sec:definitions}

Given an oriented hyperplane $H$, we denote the closed half-space in direction of the normal vector by $H^{+1}$ or $H^+$ and the closed half-space in direction of the reverse normal vector by $H^{-1}$ or $H^-$. The corresponding open half-spaces are denoted by $\interior{H}^+$ and $\interior{H}^-$, respectively. The hyperplanes and half-spaces in $\RR^n$ can be parametrized by pairs $(v,\lambda)$ of a normal vector $v\in\RR^n\setminus\{0\}$ and a scalar $\lambda\in\RR$ in the following fashion:
\begin{eqnarray*}
H_{v,\lambda}&=&\{x\in\RR^n : \sprod{x}{v}=\lambda \}\\
H^+_{v,\lambda}&=&\{x\in\RR^n : \sprod{x}{v}\geq\lambda \}\\
H^-_{v,\lambda}&=&\{x\in\RR^n : \sprod{x}{v}\leq\lambda \}
\end{eqnarray*}

By a \dfn{continuous probability measure} $\mu$ on $\RR^n$ we mean a measure $\mu$ on the set $\RR^n$ equipped with the Borel $\sigma$-algebra such that $\mu(\RR^n)=1$ and $\mu$ is absolutely continuous with respect to the Lebesgue measure $L$, i.e.\ if $L(S)=0$, then $\mu(S)=0$ for all measurable sets $S$. Whenever we use the term ``measure'', we assume that all of these conditions hold. Note that for a continuous measure the map $(v,\lambda)\mapsto \mu(H^+_{v,\lambda})$ is continuous. Note also that any continuous measure $\mu$ can be written as $\mu(S)=\int_S h dL$ where $h$ is a Borel-measurable function by the Radon-Nikodym theorem. Two $h_1,h_2$ with this property have to agree almost everywhere. We think of $\mu$ as coming with a fixed choice of $h$ and in abuse of terminology define the \dfn{support} of a measure as the set where $h$ is non-zero.

The question we want to address is the following.

\begin{question}
Let $\mu_1,\ldots,\mu_n$ be continuous probability measures on $\RR^n$ and $\alpha_1,\ldots,\linebreak\alpha_n\in[0,1]$. When does there exist a hyperplane $H_{v,\lambda}$ such that $\mu_i(H^+_{v,\lambda})=\alpha_i$ for all $i\in[n]$?
\end{question}

We call such a hyperplane $H_{v,\lambda}$ an \dfn{$(\alpha_1,\ldots,\alpha_n)$-splitting} of $\mu_1,\ldots,\mu_n$. If $\alpha_i\not=\frac{1}{2}$ for some $i$, we call the splitting \dfn{uneven}. It is well known that for a given uneven $\alpha$,  $\alpha$-splittings do not exist in general. Two examples for this are presented in section \ref{sec:examples}. 

\section{Auxiliary functions $f_i$}
\label{sec:f_i}

Before we turn to these examples we introduce a tool to keep track of the set of hyperplanes $H$ with $\mu(H)=\alpha$, for a fixed measure $\mu$ and a fixed ratio $\alpha\in[0,1]$. Unless the support of $\mu$ is unbounded and $\alpha\in\{0,1\}$, we always have that for any $v\in\RR^n\setminus\{0\}$ there exists a hyperplane $H_{v,\lambda}$ with $\mu(H^+)=\alpha$. The idea is to pick a point $f(v)\in H_{v,\lambda}$ to record its position. More formally, we let $f:\RR^n\setminus\{0\}\rar\RR^n$ be a function with $\mu(H^+_{v,\sprod{v}{f(v)}})=\alpha$ for all $v$. Note that $H_{v,\sprod{v}{x}}$ is the unique hyperplane with normal $v$ that contains $x$. Note also that in general, we do not require $f$ to be continuous.

\begin{example}
\label{ex:setup}
As an example consider the probability measure $\mu$ defined by the probability density that is constant non-zero on a ball in $\RR^n$ with center $c$ and radius $r$ and zero everywhere else. Let $0<\alpha<\frac{1}{2}$. In this case we can find an $f$ such that $f(v)$ always lies on a line through $c$ and has the desired property. Moreover $f:S^n\rar\RR^n$ is continuous and differentiable, $\Im f$ is a sphere with center $c$ and radius strictly smaller than $r$ and $H_{v,\sprod{v}{f(v)}}$ is tangent to $\Im f$ with $H^-_{v,\sprod{v}{f(v)}}\supset\Im f$. These additional properties of $f$ aid intuition, however, we make use of them only in the examples.
\end{example}

To illustrate the use of such a map $f$, we give the following proposition.

\begin{prop}
Let $\mu_1,\ldots,\mu_n$ be continuous measures, $\alpha_1,\ldots,\alpha_n\in(0,1)$ such that for all $i$ and all $v\in S^n$ the choice of $\lambda$ such that $\mu_i(H^+_{v,\lambda})=\alpha_i$ is unique. Let $f_i:S^n\rar\RR^n$ be such that $\mu_i(H^+_{v,\sprod{v}{f(v)}})=\alpha_i$ for all $i$.

Then there is a hyperplane $H$ such that $\mu_i(H^+)=\alpha_i$ for all $i$ if and only if there exists a $v\in S^n$ and a hyperplane $H$ with normal $v$ such that $f_i(v)\in H$ for all $i$. 
\end{prop}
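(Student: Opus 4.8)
The plan is to unwind the definition of the auxiliary functions $f_i$; the statement is essentially a bookkeeping lemma, and the only hypothesis that does any work is the uniqueness of $\lambda$, which is needed for just one of the two implications.

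For the ``only if'' direction, suppose $H$ is an oriented hyperplane with $\mu_i(H^+)=\alpha_i$ for all $i$. Write $H=H_{v,\lambda}$, and after replacing $(v,\lambda)$ by $(v/\|v\|,\lambda/\|v\|)$ — which changes neither $H$ nor $H^+$ — assume $v\in S^n$. Then for each fixed $i$ there are two scalars whose associated hyperplane with normal $v$ has positive half-space of $\mu_i$-measure $\alpha_i$, namely $\lambda$ itself and $\sprod{v}{f_i(v)}$ (the latter by the defining property of $f_i$). By the uniqueness hypothesis these scalars coincide, so $\sprod{v}{f_i(v)}=\lambda$, i.e.\ $f_i(v)\in H_{v,\lambda}=H$ for every $i$. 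Hence $v$ and $H$ witness the right-hand side.

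For the ``if'' direction, suppose $v\in S^n$ and $H$ is a hyperplane with normal $v$ with $f_i(v)\in H$ for every $i$. Write $H=H_{v,\lambda}$; since $f_i(v)\in H$ we get $\lambda=\sprod{v}{f_i(v)}$, so the defining property of $f_i$ gives $\mu_i(H^+)=\mu_i\bigl(H^+_{v,\sprod{v}{f_i(v)}}\bigr)=\alpha_i$ for all $i$, and $H$ is the desired splitting. Note that this implication uses only the definition of the $f_i$, not the uniqueness hypothesis; uniqueness enters only above, to guarantee that a hyperplane realizing the prescribed measures is simultaneously ``seen'' by all of $f_1,\ldots,f_n$.

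I do not expect a genuine obstacle here — the proposition is a reformulation intended to motivate the substantive results later in the paper. The only points deserving care are the normalization of the normal vector to lie on $S^n$ and the observation that on the right-hand side the hyperplane $H$ must be taken with normal exactly the same $v$ that is fed into the $f_i$, since $H_{v,\sprod{v}{x}}$ is by construction the unique hyperplane through $x$ with normal $v$.
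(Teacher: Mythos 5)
Your proposal is correct and follows essentially the same argument as the paper: the forward direction uses the uniqueness of $\lambda$ to force $f_i(v)\in H$ for all $i$, and the reverse direction is a direct unwinding of the defining property of the $f_i$. Your version merely spells out the normalization of $v$ and the identification $\lambda=\sprod{v}{f_i(v)}$, which the paper leaves implicit.
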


\begin{proof}
``$\rAr$'' Let $v$ be the normal of $H$. By assumption there is only one hyperplane with normal $v$ that gives the desired split. So all $f_i(v)$ have to be contained in $H$.

``$\lAr$'' If there are $v$ and $H$ with normal $v$ and $f_i(v)\in H$ for all $i$, then $H=H_{v,\sprod{v}{f_i(v)}}$ for all $i$ and hence $\mu_i(H^+)=\alpha_i$ for all $i$ as desired.
\end{proof}

\section{Sandwiches Without Uneven Splittings}
\label{sec:examples}

We are now going to take an informal look at two examples where there are no $\alpha$-splittings, for certain uneven $\alpha$. These examples are built from spheres as in example \ref{ex:setup}. Note that in this case, for $\alpha\in(0,1)$ the normal determines the hyperplane whose positive half-space has measure $\alpha$ uniquely.

\begin{example}
Let $\mu_1$ and $\mu_2$ be two measures in $\RR^2$ given by two concentric discs of different radius. Say the radius of the disc of $\mu_1$ is larger. Fix $\alpha_1=\alpha_2=\alpha\in(0,\frac{1}{2})$ and consider the corresponding $f_1,f_2$. Both $\Im f_1$ and $\Im f_2$ are circles, the former containing the latter. Any hyperplane touching $f_1$ cannot touch $f_2$. More precisely: for any $v$ the point $f_2(v)$ lies in  $H^-_{v,\sprod{v}{f_1(v)}}$, at non-zero distance from $H_{v,\sprod{v}{f_1(v)}}$. Hence, there is no $\alpha$-splitting.
\end{example}

\begin{example}
Let $\mu_1,\mu_2,\mu_3$ be given by 3 disjoint balls in $\RR^3$ such  that their center-points lie on a line and the radius of the center sphere (say $\mu_2$) is larger than that of the other two. Fix one $\alpha\in(0,\frac{1}{2})$ for all $\mu_i$. The resulting $f_i$ are again spheres and the center one has larger radius than the other two. Now consider any hyperplane tangent to $\Im f_1$ and $\Im f_2$ (more precisely a hyperplane $H_{v,\lambda}$ such that $f_1(v),f_2(v)\in H_{v,\lambda}$). This hyperplane cannot meet $\Im f_3$, so there is no hyperplane splitting all three measures according to the desired ratio.
\end{example}

\section{Separability}
\label{sec:separation}

We have seen that if the $\Im f_i$ lie on a line, we cannot expect there to be an uneven splitting. However if the $\Im f_i$ were to be in ``general position'', we might have more luck. To make this notion precise we define the concept of sets in $n$-space that can be separated by hyperplanes.

$n$ points in $\RR^n$ are affinely independent, if and only if for any partition of these points into two classes, there is an affine hyperplane such that the one class is on one side and the other class is on the other side. In analogy, we say that sets $S_1,\ldots,S_n\subset\RR^n$ can be strictly separated by hyperplanes, or \dfn{separated} for short, if for any function $\sigma:[n]\rar\{-1,+1\}$, there  is a hyperplane $H$ such that $\interior{H^{\sigma(i)}}\supset S_i$ for all $i$. In other words, no matter how we prescribe which $S_i$ are supposed to be in front of $H$ and which $S_i$ are supposed to be behind $H$, we can always find a hyperplane $H$ that achieves this.

To illustrate this definition give the following proposition.
\begin{prop}
\begin{enumerate}
\item Sets $S_1,\ldots,S_n\in\RR^n$ can be separated if and only if $\conv S_1,\ldots, \linebreak\conv S_n$ can be separated.
\item Convex sets $S_1,\ldots, S_n\in\RR^n$ can be separated if and only if there does not exist an $(n-2)$-dimensional affine subspace that meets all of the $S_i$.
\item Points $s_1,\ldots,s_n\in\RR^n$ are in general position if and only if the singletons $\{s_1\},\ldots,\{s_n\}\subset\RR^n$ can be separated.
\end{enumerate}
\end{prop}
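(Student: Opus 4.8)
The plan is to dispatch the three parts in the order 1, 3, 2, since part 2 will use part 3.

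For part 1 I would note that each open half-space $\interior{H^{\sigma(i)}}$ is convex, so $\interior{H^{\sigma(i)}}\supset S_i$ iff $\interior{H^{\sigma(i)}}\supset\conv S_i$; hence for any $\sigma$ a hyperplane separates $S_1,\ldots,S_n$ according to $\sigma$ exactly when it separates $\conv S_1,\ldots,\conv S_n$ according to $\sigma$. This also lets me assume all sets convex for the rest. For part 3, recalling that ``general position'' of $n$ points in $\RR^n$ means affine independence: if the $s_i$ are affinely independent then for any $\sigma$ the sets $\conv\{s_i:\sigma(i)=+1\}$ and $\conv\{s_i:\sigma(i)=-1\}$ are disjoint compact convex sets (a common point would furnish an affine dependence) and can be strictly separated, the case of an empty class being trivial for a finite point set; conversely an affine dependence $\sum_i c_is_i=0$, $\sum_i c_i=0$ with coefficients not all zero produces, after normalizing $\sum_{c_i>0}c_is_i=\sum_{c_i<0}(-c_i)s_i$, a point lying in $\conv\{s_i:c_i>0\}\cap\conv\{s_i:c_i<0\}$, and this point blocks the sign pattern equal to $+1$ where $c_i\geq0$ and $-1$ elsewhere, so the singletons cannot be separated.

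For the ``$\Rightarrow$'' direction of part 2 I would argue contrapositively: if an $(n-2)$-flat $A$ meets every $S_i$, pick $p_i\in A\cap S_i$; the $n$ points $p_1,\ldots,p_n$ lie in the $(n-2)$-dimensional flat $A$, hence are affinely dependent, hence by part 3 the singletons $\{p_1\},\ldots,\{p_n\}$ cannot be separated; the sign pattern witnessing this then also witnesses non-separability of the $S_i\supset\{p_i\}$.

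The ``$\Leftarrow$'' direction is the real content, and I would again go contrapositive. If the convex sets $S_i$ cannot be separated, fix a bad sign pattern and its induced partition $[n]=P\sqcup Q$; then no hyperplane strictly separates $C:=\conv\bigcup_{i\in P}S_i$ from $D:=\conv\bigcup_{j\in Q}S_j$. For bounded $S_i$ these are compact convex, so failure of strict separation forces $C\cap D\neq\emptyset$; pick $x$ in the intersection. Convexity of the $S_i$ lets me write $x=\sum_{i\in P}\lambda_ix_i=\sum_{j\in Q}\mu_jy_j$ with $x_i\in S_i$, $y_j\in S_j$ and nonnegative coefficients summing to $1$ on each side. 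Discarding the zero coefficients leaves some $m\le n$ points, one from each surviving index, carrying an affine dependence with all coefficients nonzero, so their affine hull has dimension at most $m-2$; adjoining one arbitrary point from each of the $n-m$ discarded $S_i$ yields a flat of dimension at most $(m-2)+(n-m)=n-2$ meeting every $S_i$, which I then enlarge to dimension exactly $n-2$. I expect the main obstacle to be precisely the analytic step here — deducing $C\cap D\neq\emptyset$ from the absence of a strict separating hyperplane. This is valid for compact sets, which is why the boundedness hypothesis and the reduction to convex hulls via part 1 are needed, but it fails for general unbounded convex sets, so the statement should be read with the sets bounded. Once a common point is in hand, the rest is just the dimension bookkeeping: an honest affine dependence among $m$ points cuts their affine hull down to dimension $\le m-2$, leaving exactly enough room to absorb the remaining $n-m$ sets and land at $n-2$.
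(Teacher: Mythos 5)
Your proof is correct, and it is considerably more self-contained than what the paper offers: the paper declares part 1 immediate from the definition, cites \cite[p.~218]{m-ldg} for part 2, and obtains part 3 as a consequence of part 2. You reverse that dependency. Part 3 is proved from first principles by a Radon-type argument (an affine dependence yields a common point of the convex hulls of the two classes, which blocks the corresponding sign pattern, and conversely affine independence gives disjoint compact convex hulls that can be strictly separated), and it then supplies the forward implication of part 2; the backward implication of part 2 is your separation-theorem-plus-dimension-count argument, which is essentially the proof one would reconstruct from the cited reference. Your caveat about boundedness is a genuine and worthwhile observation rather than a defect: for unbounded convex sets the backward implication of part 2 fails (two disjoint closed convex sets in the plane, say a closed half-plane and a convex region asymptotic to its boundary line, meet no common point yet admit no strictly separating line), so the proposition must be read for bounded sets --- which is consistent with every use the paper makes of it, since the sets $\Im f_i$ are assumed bounded throughout.
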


1.\ is immediate from the definition. 2.\ can e.g.\ be found in \cite[p.\ 218]{m-ldg}. 3.\ is an immediate consequence of 2.

\section{The Main Result}
\label{sec:main-result}

\begin{theorem}
\label{thm:main-result}
Let $\mu_1,\ldots,\mu_n$ be continuous probability measures on $\RR^n$. Let $\alpha_1,\ldots,\linebreak\alpha_n\in[0,1]$.  Let $f_1,\ldots,f_n:S^{n-1}\rar\RR^n$ be functions such that $\mu_i(H^+_{v,\sprod{v}{f_i(v)}})=\alpha_i$ for all $i\in[n]$ and all $v\in S^{n-1}$. The $f_i$ need not be continuous, but the $\Im f_i$ have to be bounded.

If $\Im f_1,\ldots,\Im f_n$ can be separated by hyperplanes, then there exists a hyperplane $H$ such that $\mu_i(H^+)=\alpha_i$ for all $i\in[n]$.
\end{theorem}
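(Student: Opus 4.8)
The plan is to set up a map from a parameter space to $\RR^n$ that encodes, for each candidate normal direction $v$, how far each measure $\mu_i$ is from being split in the ratio $\alpha_i$ by a hyperplane through a fixed reference point, and then apply the Poincaré–Miranda theorem. Concretely, since $\Im f_1,\ldots,\Im f_n$ are bounded and separated, I first pick a convenient bounded region: by part 1 of the separability proposition I may replace each $\Im f_i$ by its (bounded, closed) convex hull, and all these convex hulls are contained in some large ball $B$. For a unit vector $v\in S^{n-1}$ and a scalar $\lambda$ in a large enough interval $[-M,M]$ (chosen so that $H_{v,\lambda}$ sweeps entirely past $B$ as $\lambda$ ranges over the interval), consider the continuous function $(v,\lambda)\mapsto(\mu_1(H^+_{v,\lambda}),\ldots,\mu_n(H^+_{v,\lambda}))$. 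The goal is to hit the point $(\alpha_1,\ldots,\alpha_n)$.

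The key idea is to use separability to choose, for each direction $v$, a hyperplane $H(v)$ that separates the $\Im f_i$ according to a sign pattern correlated with the direction, so that the sign of $\mu_i(H^+) - \alpha_i$ is forced on the ``boundary'' of the parameter space. The natural parameter space is the closed ball $D^n$: think of a point of $D^n$ as a pair $(v, t)$ with $v\in S^{n-1}$ and $t\in[-1,1]$ (with the degenerate fibers at $t=\pm1$), or more cleanly work on $S^{n-1}\times[-M,M]$ and collapse. For fixed $v$, as $\lambda$ decreases from $M$ to $-M$, each $\mu_i(H^+_{v,\lambda})$ increases continuously from $0$ to $1$ (it passes through $\alpha_i$ exactly when the hyperplane passes the right position; near $\lambda = \langle v, f_i(v)\rangle$ its value is $\alpha_i$ since $\Im f_i$ witnesses a valid splitting hyperplane in direction $v$). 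Define $g_i(v,\lambda) = \mu_i(H^+_{v,\lambda}) - \alpha_i$; I want a common zero. The map $(v,\lambda)\mapsto(g_1,\ldots,g_n)$ is a continuous map from an $n$-dimensional manifold-with-boundary to $\RR^n$, and I need to show $0$ is in its image.

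To invoke Poincaré–Miranda I need the right boundary behavior on an $n$-cube. Here is where separability does the work: I reparametrize so that the normal direction is controlled by $n-1$ of the coordinates (via a chart of $S^{n-1}$, or better, by working with $v$ ranging over a suitable family) and $\lambda$ by the last coordinate; for each coordinate $i\in[n]$, I must exhibit that $g_i$ (after a suitable linear change of the target, matching coordinate $i$ to measure $i$) is $\le 0$ on one face of the cube and $\ge 0$ on the opposite face. The separating hyperplanes give exactly this: flipping the orientation $v\mapsto -v$ swaps $H^+$ and $H^-$, hence sends $\mu_i(H^+_{v,\lambda})$ to $1 - \mu_i(H^+_{-v,-\lambda})$, and by choosing on each pair of antipodal directions a hyperplane strictly separating the $\Im f_i$ with a prescribed sign vector $\sigma$, I can guarantee that the true splitting position of $\mu_i$ lies strictly to one side, forcing the sign of $g_i$. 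Carrying this out requires passing from $S^{n-1}$ to a cube via the standard identification $S^{n-1}\cong \partial I^n / \!\sim$ won't quite be clean; instead I expect to build the Poincaré–Miranda cube directly as $[-1,1]^{n-1}\times[-M,M]$, with the first $n-1$ coordinates parametrizing directions in an open hemisphere (enough by a limiting/antipodal argument) and handle the remaining directions by a separate argument or by a clever global choice of the function $H(\cdot)$.

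**Main obstacle.** The hard part will be converting ``separated by hyperplanes'' — a statement about the existence of a separating hyperplane for each of the $2^n$ sign patterns — into the precise sign conditions on the $n$ faces of a single $n$-dimensional cube that Poincaré–Miranda demands, while simultaneously keeping the map continuous across the parametrization of $S^{n-1}$ (which is not a cube). I expect the cleanest route is: (i) fix a generic point $p$ (e.g., a point in the intersection of all the separating half-spaces for some pattern, or the ``center'' witnessed by separability); (ii) for $v\in S^{n-1}$ let $\lambda(v) = \langle v,p\rangle$ and translate coordinates so hyperplanes through $p$ correspond to $\lambda=0$; (iii) show that the function $v\mapsto \bigl(\mu_1(H^+_{v,\lambda(v)})-\alpha_1,\ldots\bigr)$, viewed as an odd-ish map $S^{n-1}\to\RR^n$, together with the extra $\lambda$-direction, has the degree/sign structure needed — here separability ensures that for each $i$ the function $v\mapsto \mu_i(H^+_{v,\langle v,f_i(v)\rangle}) - \mu_i(H^+_{v,\lambda(v)})$ has a definite sign depending only on which side of the separating hyperplane $f_i(v)$ falls, and separability lets us make these signs realize every pattern. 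Reconciling this with a genuine cube (as opposed to an argument via Borsuk–Ulam-type degree, which the introduction explicitly says fails for uneven ratios) is the delicate point, and I anticipate the authors resolve it by an explicit and somewhat intricate coordinatization that I would need to get exactly right.
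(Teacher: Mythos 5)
You have correctly identified the paper's target map $p(v,\lambda)=(\mu_i(H^+_{v,\lambda})-\alpha_i)_{i\in[n]}$ and the right tool (Poincar\'e--Miranda), and you have also correctly located the hard point --- turning the $2^n$ separating hyperplanes into sign conditions on the facets of an $n$-cube while respecting the topology of the space of oriented hyperplanes. But your proposal stops exactly there: the route you sketch in (i)--(iii), based on coordinatizing (a hemisphere of) $S^{n-1}$ by $n-1$ cube coordinates and $\lambda$ by the last one, is not carried out, and as you yourself concede it is unclear how to make it work; the antipodal identifications and the non-cube topology of $S^{n-1}$ are precisely what defeats a chart-based approach. (A smaller slip: you assert $\mu_i(H^+_{v,\lambda})$ runs from $0$ to $1$ as $\lambda$ ranges over $[-M,M]$; that requires bounded supports, which the theorem does not assume --- only the $\Im f_i$ are bounded. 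The correct, and sufficient, statement is that once $H_{v,\lambda}$ has swept past a ball $B$ containing all $\Im f_i$, the sign of $\mu_i(H^+_{v,\lambda})-\alpha_i$ is forced, because $H^+_{v,\lambda}$ then contains, or is contained in, $H^+_{v,\sprod{v}{f_i(v)}}$.)

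The paper's resolution is to give up on coordinatizing the sphere and instead \emph{construct} a continuous map $q:Q^n\to X=(\RR^n\setminus\{0\})\times\RR$ from the cube into the space of oriented hyperplanes, by induction over the skeleton of $Q^n$. For each partial sign function $\sigma$ one considers the set $C_\sigma$ of all $(v,\lambda)$ such that $H_{v,\lambda}$ puts the $i$-th separated region on side $\sigma(i)$ for every $i\in\dom\sigma$; separability makes each $C_\sigma$ a nonempty convex polyhedral cone, and these sets are nested compatibly with inclusion of sign functions. The $2^n$ vertices of $Q^n$ are sent to separating hyperplanes realizing the $2^n$ full sign patterns, and each face $Q^n_\sigma$ is then filled in inside $C_\sigma$ using its connectivity; the sign hypotheses of Poincar\'e--Miranda for $p\circ q$ follow from the containment $\Im f_i\subset H^{\sigma(i)}_{v,\lambda}$. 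The one technical wrinkle is that the cones $C_\sigma$ contain the degenerate points $(0,\lambda)$, which must be excised without destroying connectivity; the paper does this by removing the ray $\{(\beta v_0,\lambda):\beta\geq 0,\ \lambda\in\RR\}$, where $v_0$ is normal to a hyperplane through the barycenters of the separated regions, and verifying that the remainders are still contractible (or, when $\sigma$ takes both signs, that the ray was never in $C_\sigma$ to begin with). This construction, isolated in the paper as a stand-alone lemma about families of $(n-|\dom\sigma|-1)$-connected sets indexed by partial sign functions, is the essential content of the proof and is exactly what your proposal leaves open.
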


The strength of this theorem obviously stands and falls with our ability to find suitable functions $f_i$. We will have a closer look at this issue in sections \ref{sec:applications} and \ref{sec:central-spheres} where we present two ways of choosing the $f_i$, that give rise to corollaries \ref{cor:S-set} and \ref{cor:support}, respectively. See also section \ref{sec:central-spheres}.

\pagebreak
\section{The Topological Tool}
\label{sec:topology}

The topological tool we are going to use in the proof of theorem \ref{thm:main-result} is the Poincaré-Miranda theorem. Let $\SS^n$ denote the set of all partial functions $\sigma:[n]\rar \{-1,1\}$. Let $Q^n=[-1,1]^n$ denote the $n$-dimensional cube. For any $\sigma\in\SS^n$ let $Q^n_\sigma=Q^n\cap\{(x_1,\ldots,x_n):  x_i=\sigma(i)\  \forall i\in\dom\sigma\}$ denote a (closed) face of $Q^n$. If $|\dom\sigma|=n$, $Q^n_\sigma$ is a vertex. If $|\dom\sigma|=1$, $Q^n_\sigma$ is a facet. If $\dom\sigma=\emptyset$, $Q^n_\sigma=Q^n$.

\begin{block}{Poincaré-Miranda Theorem}
\label{thm:p-m}
Let $q:Q^n\rar \RR^n$ be a continuous function such that for all $\sigma\in\SS^n$ and all $i\in\dom\sigma$
$$ \sigma(i) q_i(Q^n_\sigma)\geq 0. $$

Then there exists a $x\in Q^n$ such that $q(x)=0$.
\end{block}

Note that $\sigma(i)q_i(Q^n_\sigma)\geq 0$ if and only if the following holds:
if $\sigma(i)=+1$ then $q_i(x)\geq 0$ for all $x\in Q^n_\sigma$ and if $\sigma(i)=-1$ then $q_i(x)\leq 0$ for all $x\in Q^n_\sigma$. 
Note also that we can, equivalently, restrict ourselves to those $\sigma$ with $|\dom\sigma|=1$ in the statement of the theorem. 

The Poincaré-Miranda theorem was shown in 1886 by Poincaré \cite{p-1886}. In 1940 Miranda \cite{m-40} showed it to be equivalent to Brouwer's fixpoint theorem from 1911. It is a beautiful generalization of the intermediate value theorem of Bolzano to higher dimensions, and it appears to be somewhat under-appreciated in modern topological geometry. For more information on this theorem we refer the reader to \cite[Chapter 4]{i-81} and \cite{k-97}.

The method by which we are going to apply this tool to the proof of our theorem is encapsulated in the following lemma which may be viewed as a slight generalization of the Poincaré-Miranda theorem. It is stated and proved here in a more general version than is necessary for the proof of theorem \ref{thm:main-result}, in the hope that this method may be useful in other contexts as well.

\begin{lemma}
\label{lem:key-lemma}
Let $X$ be a topological space, $p:X\rar \RR^n$ be a continuous function and $(C_\sigma^*)_{\sigma\in\SS^n}$ a family of sets with the following properties:
\begin{enumerate}
\item $C_\sigma^*\subset X$ for all $\sigma\in\SS^n$.
\item $C_\sigma^*$ is $(n-|\dom \sigma|-1)$-connected for all $\sigma\in\SS^n$.
\item If $\sigma_1\subset\sigma_2$, then $C_{\sigma_1}^*\supset C_{\sigma_1}^*$ for all $\sigma_1,\sigma_2\in\SS^n$. 
\item $\sigma(i) p_i(C^*_\sigma)\geq 0$ for all $\sigma\in\SS^n$ and all $i\in[n]$.
\end{enumerate}
Then there is an element $y\in C^*_\emptyset$ such that $p(y)=0$.
\end{lemma}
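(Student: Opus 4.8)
The plan is to reduce Lemma~\ref{lem:key-lemma} to the Poincaré--Miranda theorem by constructing a continuous map $Q^n \to X$ that carries each face $Q^n_\sigma$ into the corresponding set $C^*_\sigma$, and then pulling back $p$ along it. First I would observe that condition~3 (read correctly, with the containment $C^*_{\sigma_1} \supset C^*_{\sigma_2}$ whenever $\sigma_1 \subset \sigma_2$) together with condition~2 gives a tower of spaces indexed by the face poset of $Q^n$: a bigger face corresponds to a smaller index $\sigma$, hence to a larger (and more highly connected) set $C^*_\sigma$, with $C^*_\emptyset$ the largest and $C^*_\sigma$ at a vertex being only $(-1)$-connected, i.e.\ merely nonempty. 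This is exactly the combinatorial data needed to build a map face-by-face by induction on the skeleton of $Q^n$.

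The key step is the inductive extension. I would construct $g: Q^n \to X$ as follows. On the $0$-skeleton, for each vertex face $Q^n_\sigma$ (with $|\dom\sigma|=n$) pick any point of $C^*_\sigma$, which is nonempty since it is $(-1)$-connected. Assume $g$ has been defined on the $(k-1)$-skeleton so that $g(Q^n_\tau) \subset C^*_\tau$ for every $\tau$ with $|\dom\tau| \geq n-k+1$. Given a $k$-face $Q^n_\sigma$ (so $|\dom\sigma| = n-k$), its boundary $\partial Q^n_\sigma \cong S^{k-1}$ is a union of faces $Q^n_\tau$ with $\sigma \subset \tau$, on each of which $g$ already maps into $C^*_\tau \subset C^*_\sigma$; hence $g|_{\partial Q^n_\sigma}$ is a map $S^{k-1} \to C^*_\sigma$. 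Since $C^*_\sigma$ is $(n - (n-k) - 1) = (k-1)$-connected, this sphere map is null-homotopic in $C^*_\sigma$, so it extends to a continuous map $Q^n_\sigma \cong D^k \to C^*_\sigma$. Doing this on every $k$-face (consistently, since they agree on shared boundary) extends $g$ over the $k$-skeleton with $g(Q^n_\sigma) \subset C^*_\sigma$ for all $\sigma$ with $|\dom\sigma| = n-k$. After $n$ steps we have $g: Q^n \to X$ continuous with $g(Q^n_\sigma) \subset C^*_\sigma$ for every $\sigma \in \SS^n$.

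Now set $q = p \circ g : Q^n \to \RR^n$, which is continuous. For $\sigma \in \SS^n$ and $i \in \dom\sigma$ we have $g(Q^n_\sigma) \subset C^*_\sigma$, so by condition~4, $\sigma(i)\, q_i(Q^n_\sigma) = \sigma(i)\, p_i(g(Q^n_\sigma)) \subset \sigma(i)\, p_i(C^*_\sigma) \subset [0,\infty)$. Thus $q$ satisfies the boundary hypotheses of the Poincaré--Miranda theorem, which yields $x \in Q^n$ with $q(x) = 0$. Setting $y = g(x) \in g(Q^n_\emptyset) = g(Q^n) \subset C^*_\emptyset$ (using $\sigma = \emptyset$ in the containment, for which $Q^n_\emptyset = Q^n$) gives $p(y) = 0$, as desired.

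The main obstacle I expect is making the inductive extension fully rigorous: one must verify that the partial maps on the various $k$-faces can be chosen to agree on their common lower-dimensional faces, so that they glue to a well-defined continuous map on the $k$-skeleton. This is handled by the induction hypothesis itself — the values on $\partial Q^n_\sigma$ are already fixed before we extend into the interior of $Q^n_\sigma$, and distinct $k$-faces meet only in $(k-1)$-faces where $g$ is already defined — but it is the place where care is needed. A secondary subtlety is the precise meaning of "$(n-|\dom\sigma|-1)$-connected": at a vertex this is $(-1)$-connected, which must be interpreted as "nonempty" for the base case to work, and one should note the convention that $S^{-1} = \emptyset$ so that the inductive step degenerates correctly at $k=0$. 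Everything else is a routine assembly of these pieces.
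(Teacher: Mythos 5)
Your proposal is correct and follows essentially the same route as the paper: build a continuous map $Q^n\rar X$ by induction on the skeleton of the cube, sending each face $Q^n_\sigma$ into $C^*_\sigma$ (vertices first, using $(-1)$-connected $=$ nonempty, then extending over $k$-faces via the $(k-1)$-connectedness of the target), and then apply Poincar\'e--Miranda to $p$ composed with this map. You also correctly read condition 3 as $C^*_{\sigma_1}\supset C^*_{\sigma_2}$ for $\sigma_1\subset\sigma_2$, which is what the paper intends.
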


\begin{proof}
\emph{Strategy.} We are going to show that there is a continuous map $q:\nolinebreak Q^n\rar X$ such that $\sigma(i)(p_i\circ q)(Q^n_\sigma)\geq 0$ for all $\sigma$ with support of size 1 and for all $i\in[n]$. Then we are done by the Poincaré-Miranda theorem, as these conditions allow us to conclude that there exists an $x\in Q^n$ such that $(p\circ q)(x)=0$, which means that $y:=q(x)\in C_\emptyset^*$ is an element with $p(y)=0$.

To show this claim we proceed by induction over $|\dom \sigma|$. For each $\sigma\in\SS^n$ we are going to define $q|_{Q^n_\sigma}$ such that $q(Q^n_\sigma)\subset C^*_\sigma$. The foundation of the induction is the definition of $q(Q^n_\sigma)$ for $\sigma$ with $\dom(\sigma)=[n]$, i.e. placing the corners of the cube.

\emph{Placing the corners.} For each $\sigma$ with $\dom \sigma=[n]$ the set $Q^n_\sigma$ is a singleton $\{x_\sigma\}$. The set $C^*_\sigma$ is $-1$-connected which means non-empty, so we can pick a point $y_\sigma\in C^*_\sigma$ arbitrarily and define $q(x_\sigma):=y_\sigma$. We now have a defined a (continuous) function $q$ on the 0-skeleton of $Q^n$.

\emph{Induction over the size of $\dom \sigma$.} Let $\sigma$ be any sign function with $|\dom\sigma|=k$. Let $\sigma_1,\ldots,\sigma_l$ denote those sign functions with $\sigma_i\supset \sigma$ and $\dom\sigma_i=k+1$. By induction $q|_{\bigcup_{i=1}^lQ^n_{\sigma_i}}$ is defined and continuous. We are now going to extend this definition to $Q^n_\sigma$. 

Note that $Q^n_\sigma$ is an $n-k$-dimensional ball and $\bigcup_{i=1}^lQ^n_{\sigma_i}=\partial Q^n_\sigma$ is an $n-k-1$-dimensional sphere. Because $\sigma_i\supset\sigma$ we know by assumption that $C^*_{\sigma_i}\subset C^*_\sigma$. So $q(\bigcup_{i=1}^lQ^n_{\sigma_i})\subset C^*_\sigma$. Because $C^*_\sigma$ is $(n-k-1)$-connected, we can extend the definition of $q$ continuously to $Q^n_\sigma$. We proceed in the same way with all $\sigma$ with $|\dom \sigma|=k$ to obtain a continuous function $q$ defined on the $n-k$-skeleton of $Q^n$.

We continue until we have defined $q$ on $\partial Q^n$ using all the $\sigma$ with $|\dom \sigma|=1$. The image of $q|_{\partial Q^n}$ lies in $C^*_\emptyset$ which is $n-1$-connected, so we finish the construction by extending $q$ continuously to all of $Q^n$. 

\emph{Conclusion.} $q$ is continuous and defined on all of $Q^n$. Its image lies in $C^*_\emptyset\subset \dom p$. Now let $\sigma$ be any sign function with $|\dom \sigma|=1$.  By assumption $\sigma(i)p_i(C^*_\sigma)\geq0$ and as $q(Q^n_\sigma)\subset C^*_\sigma$ we conclude that $\sigma(i)(p_i\circ q)(Q^n_\sigma)\geq 0$ for all $i\in\dom\sigma$. So $p \circ q$ meets the conditions of the Poincaré-Miranda theorem.
\end{proof}

\section{Proof of Main Result}
\label{sec:proof}
\vspace{-2ex}

The strategy is to apply lemma \ref{lem:key-lemma}. To that end we will first define $p$ and then go on to define sets $C_\sigma$ as an intermediate step towards the definition of the sets $C_\sigma^*$.

\paragraph{Definition of $p$.}

A pair $(v,\lambda)\in \RR^n\times\RR$ defines an oriented hyperplane in $\RR^n$ if $v\not=0$. If $v=0$ we have no interpretation for $(v,\lambda)$. Hence we put $X=\RR^n\setminus\{0\}\times\RR$. Note that we can assume the $f_i$ to be defined on all of $X$ and not only on $S^{n-1}$ as we can extend them to the larger domain via $f_i(v)=f_i(v/ |v|)$, keeping $\Im f_i$ unchanged.
Now we define
\begin{eqnarray*}
  p:X & \rar & \RR^n\\
(v,\lambda) & \mapsto & (\mu_i(H^+_{v,\lambda})-\alpha_i)_{i\in[n]}
\end{eqnarray*}
which is a continuous function with the property that $p(v,\lambda)=0$, if and only if $\mu_i(H^+_{v,\lambda})=\alpha_i$ for all $i\in[n]$.

\paragraph{Definition and properties of $C_\sigma$.}

For each $\sigma\in\SS^n$ with $|\dom\sigma|=n$ let $H_\sigma$ denote a hyperplane with $\interior H_\sigma^{\sigma(i)}\supset S_i$ for all $i\in[n]$. Such a hyperplane exists by assumption. Let $\mathcal{H}'$ denote the arrangement of all such $H_\sigma$. The sets $\Im f_i$ are all contained in distinct $n$-dimensional cells of $\mathcal{H}'$. By assumption the $\Im f_i$ are bounded, so we can add hyperplanes to $\mathcal{H}'$ to obtain a hyperplane arrangement $\mathcal{H}$ in which each $\Im f_i$ is contained in a bounded $n$-dimensional cell. Let $S_i$ denote the closed $n$-dimensional cell in $\mathcal{H}$ containing $\Im f_i$. The sets $S_1,\ldots,S_n\subset \RR^n$ are $n$-dimensional polytopes. Let $V(S_i)$ denote their vertex sets.

For all partial functions $\sigma:[n]\rar \{-1,+1\}$ we define
$$C_\sigma :=\{(v,\lambda):\forall i\in \dom\sigma\ \forall x\in V(S_i): \sigma(i)\sprod{v}{x} \geq \sigma(i)\lambda\}.$$

We now check whether properties 1.\ through 4.\ from lemma \ref{lem:key-lemma} hold for the family $(C_\sigma)_{\sigma\in\SS^n}$. First of all $(C_\sigma)_{\sigma\in\SS^n}$ satisfies condition 4.\ as lemma \ref{lem:C4} shows.

\begin{lemma}
\label{lem:C4}
If $(v,\lambda)\in C_\sigma$, then $\sigma(i)p_i(v,\lambda)\geq 0$ for all $i\in\dom\sigma$.
\end{lemma}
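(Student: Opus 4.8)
The plan is to unwind the definitions of $C_\sigma$ and of $p$, and observe that membership in $C_\sigma$ records exactly the geometric position of the polytope $S_i$ relative to the hyperplane $H_{v,\lambda}$, for each $i\in\dom\sigma$. First I would fix $(v,\lambda)\in C_\sigma$ and $i\in\dom\sigma$, and argue that $S_i\subset H^{\sigma(i)}_{v,\lambda}$. This is where the use of vertex sets pays off: since $S_i$ is a polytope, $S_i=\conv V(S_i)$, and the half-space $H^{\sigma(i)}_{v,\lambda}$ is convex, so the inequality $\sigma(i)\sprod{v}{x}\geq\sigma(i)\lambda$ holding for every vertex $x\in V(S_i)$ — which is precisely the defining condition of $C_\sigma$ — extends by convexity to all of $S_i$. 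Concretely, if $\sigma(i)=+1$ this says $S_i\subset H^+_{v,\lambda}$, and if $\sigma(i)=-1$ this says $S_i\subset H^-_{v,\lambda}$.

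The second step is to translate this containment into a statement about the measure $\mu_i$. Recall that $\Im f_i\subset S_i$, so in particular $f_i(v)\in S_i$, hence $f_i(v)\in H^{\sigma(i)}_{v,\lambda}$, i.e. $\sigma(i)\sprod{v}{f_i(v)}\geq\sigma(i)\lambda$. Now the defining property of $f_i$ is that $\mu_i(H^+_{v,\sprod{v}{f_i(v)}})=\alpha_i$. The point is that moving the offset $\lambda$ so that the hyperplane sweeps across $S_i$ can only increase or decrease $\mu_i(H^+_{v,\lambda})$ monotonically. Precisely: for $\sigma(i)=+1$ we have $\lambda\leq\sprod{v}{f_i(v)}$, and since $\lambda'\mapsto\mu_i(H^+_{v,\lambda'})$ is non-increasing in $\lambda'$, we get $\mu_i(H^+_{v,\lambda})\geq\mu_i(H^+_{v,\sprod{v}{f_i(v)}})=\alpha_i$, i.e. $p_i(v,\lambda)=\mu_i(H^+_{v,\lambda})-\alpha_i\geq 0=\sigma(i)\cdot 0$, so $\sigma(i)p_i(v,\lambda)\geq 0$. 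Symmetrically, for $\sigma(i)=-1$ we have $\lambda\geq\sprod{v}{f_i(v)}$, so $\mu_i(H^+_{v,\lambda})\leq\alpha_i$ and $\sigma(i)p_i(v,\lambda)=-p_i(v,\lambda)\geq 0$. Combining both cases gives exactly $\sigma(i)p_i(v,\lambda)\geq 0$ for all $i\in\dom\sigma$, as claimed.

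I expect the only real subtlety to be the monotonicity argument in the second step — one must be careful that $\lambda'\mapsto\mu_i(H^+_{v,\lambda'})$ is genuinely monotone (non-increasing) for fixed $v$, which is immediate from $H^+_{v,\lambda'}=\{x:\sprod{x}{v}\geq\lambda'\}$ being a nested family of half-spaces as $\lambda'$ varies, together with monotonicity of the measure $\mu_i$. There is no issue with non-uniqueness of the $\lambda$ realizing $\mu_i(H^+)=\alpha_i$: we never assume uniqueness here, we only need the single inequality comparing $\lambda$ with $\sprod{v}{f_i(v)}$ in the correct direction. The convexity step is routine, and the passage from $\Im f_i\subset S_i$ to $f_i(v)\in S_i$ is trivial, so the lemma follows with essentially no computation. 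I would write the proof in two short paragraphs mirroring the two steps above.
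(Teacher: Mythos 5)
Your proposal is correct and follows essentially the same route as the paper's proof: extend the vertex inequalities to all of $S_i$ by convexity, deduce $f_i(v)\in H^{\sigma(i)}_{v,\lambda}$ from $\Im f_i\subset S_i$, and conclude via the nesting (monotonicity) of the half-spaces $H^+_{v,\lambda'}$ in $\lambda'$. No gaps.
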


\begin{proof}
Let $(v,\lambda)\in C_\sigma$ and $i\in\dom\sigma$. For any $x\in\RR^n$ the inequality $\sigma(i)\sprod{v}{x} \geq \sigma(i)\lambda$ holds, if any only if $x\in H^{\sigma(i)}_{v,\lambda}$. As the $S_i$ are convex $H^{\sigma(i)}_{v,\lambda}\supset S_i \supset \Im f_i$ follows and hence $f_i(v)\in H^{\sigma(i)}_{v,\lambda}$ which means $H^{\sigma(i)}_{v,\lambda}\supset H_{v,\sprod{v}{f_i(v)}}$. If $\sigma(i)=+1$ this implies $H^+_{v,\lambda}\supset H^+_{v,\sprod{v}{f_i(v)}}$, so $\mu_i(H^+_{v,\lambda})\geq\alpha_i$. If $\sigma(i)=-1$ this implies $H^+_{v,\lambda} \subset H^+_{v,\sprod{v}{f_i(v)}}$, so $\mu_i(H^+_{v,\lambda})\leq\alpha_i$. 
\end{proof}

That $(C_\sigma)_{\sigma\in\SS^n}$ satisfies 3.\ is immediate as $\sigma_1\subset\sigma_2$ means that any $(v,\lambda)\in C_{\sigma_1}$ has to satisfy a subset of the constraints defining $C_{\sigma_2}$. 2.\ also holds for the sets $C_\sigma$ as it turns out that they are non-empty and convex and hence simply connected. In fact we can show more:

\begin{lemma}
$C_\sigma$ is a polyhedral cone, for all $\sigma\in\SS^n$. If $\dom\sigma\not=\emptyset$, then $C_\sigma$ is pointed. For all $\sigma\in\SS^n$ there exists $(v,\lambda)\in C_\sigma$ with $v\not=0$.
\end{lemma}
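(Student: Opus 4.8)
The plan is to derive all three assertions directly from the defining linear inequalities of $C_\sigma$, which we view as a subset of $\RR^n\times\RR=\RR^{n+1}$ with coordinates $(v,\lambda)$. For \emph{polyhedrality}, observe that for each fixed $i\in\dom\sigma$ and each fixed $x\in V(S_i)$ the condition $\sigma(i)\sprod{v}{x}\geq\sigma(i)\lambda$ is a homogeneous linear inequality in $(v,\lambda)$; since $\dom\sigma$ is finite and each $S_i$, being a polytope, has finitely many vertices, $C_\sigma$ is an intersection of finitely many closed half-spaces bounded by hyperplanes through the origin of $\RR^{n+1}$, hence a polyhedral cone (for $\dom\sigma=\emptyset$ there are no constraints and $C_\emptyset=\RR^{n+1}$).

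For \emph{pointedness} when $\dom\sigma\neq\emptyset$, I would show that the lineality space $C_\sigma\cap(-C_\sigma)$ is trivial. Suppose $(v,\lambda)$ lies in both $C_\sigma$ and $-C_\sigma$. Applying the defining inequalities to $(v,\lambda)$ and to $(-v,-\lambda)$ shows that for every $i\in\dom\sigma$ and every $x\in V(S_i)$ one has both $\sigma(i)(\sprod{v}{x}-\lambda)\geq0$ and $\sigma(i)(\sprod{v}{x}-\lambda)\leq0$; since $\sigma(i)\in\{-1,1\}$ this forces $\sprod{v}{x}=\lambda$. Fix some $i_0\in\dom\sigma$ --- this is the only place $\dom\sigma\neq\emptyset$ enters. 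Then the affine function $x\mapsto\sprod{v}{x}-\lambda$ vanishes on all of $V(S_{i_0})$, and since $S_{i_0}$ is an $n$-dimensional polytope its vertices affinely span $\RR^n$; so that function is identically zero, forcing $v=0$ and then $\lambda=0$. Thus $C_\sigma\cap(-C_\sigma)=\{0\}$ and $C_\sigma$ is pointed.

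For the \emph{existence of $(v,\lambda)\in C_\sigma$ with $v\neq0$}, the case $\sigma=\emptyset$ is immediate. Otherwise, extend $\sigma$ to a sign function $\tau\in\SS^n$ with $\dom\tau=[n]$ and $\tau|_{\dom\sigma}=\sigma$, and write the corresponding hyperplane of the construction as $H_\tau=H_{v,\lambda}$, so $v\neq0$. Because $H_\tau$ is one of the hyperplanes of the arrangement $\mathcal{H}$, no cell of $\mathcal{H}$ can straddle it; since $\Im f_i\subset\interior{H_\tau^{\tau(i)}}$, the closed cell $S_i$ containing $\Im f_i$ therefore satisfies $S_i\subset H_\tau^{\tau(i)}$ for every $i\in[n]$, in particular $S_i\subset H_{v,\lambda}^{\sigma(i)}$ for all $i\in\dom\sigma$. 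Unwinding the definition of the closed half-space $H_{v,\lambda}^{\sigma(i)}$, this says $\sigma(i)\sprod{v}{x}\geq\sigma(i)\lambda$ for every $x\in S_i$, hence for every $x\in V(S_i)$; so $(v,\lambda)\in C_\sigma$ and $v\neq0$. The only slightly delicate point in the whole argument is this last use of the arrangement: it is precisely because each separating hyperplane $H_\tau$ was put into $\mathcal{H}$ that the entire cell $S_i$, and not merely $\Im f_i$, is confined to the prescribed closed half-space. Everything else is routine.
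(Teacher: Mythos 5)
Your proof is correct and follows essentially the same route as the paper: polyhedrality and the cone property from the finitely many homogeneous linear constraints, pointedness from full-dimensionality of $S_{i_0}$, and the nonzero element from a separating hyperplane of the arrangement. Your pointedness step is in fact slightly more careful than the paper's, since you explicitly conclude that the lineality space is $\{(0,0)\}$ rather than deriving a contradiction that does not apply to the origin itself.
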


\begin{proof}
\emph{$C_\sigma$ is a polyhedron.}
$C_\sigma$ is defined by a finite system of linear inequalities.

\emph{$C_\sigma$ is a cone.} For any $\beta\geq 0$ the implication
$$\sigma(i)\sprod{v}{x} \geq \sigma(i)\lambda \rAr \sigma(i)\sprod{\beta v}{x} \geq \sigma(i)\beta \lambda $$
holds and hence if $(v,\lambda)\in C_\sigma$, then $\beta(v,\lambda)\in C_\sigma$.

\emph{$C_\sigma$ is pointed.} Suppose $i\in\dom\sigma$ and both $(v,\lambda)$ and $-(v,\lambda)$ are in $C_\sigma$, then for all $x\in S_i$ we have
\begin{eqnarray*}
 \sigma(i)\sprod{v}{x} \leq \sigma(i)\lambda&\wedge&  \sigma(i)\sprod{v}{x} \geq \sigma(i)\lambda
\end{eqnarray*}
so in fact equality holds. This implies that $S_i$ lies in the hyperplane defined by this equation, which is a contradiction to $S_i$ being $n$-dimensional.

\emph{There exists $(v,\lambda)\in C_\sigma$ with $v\not=0$.} Let $\sigma\in\SS^n$. Let $S_i'$ denote the $n$-dimensional cells of $\mathcal{H}'$ containing $\Im f_i$ for all $i\in[n]$. By definition of $\mathcal{H}'$ there is a hyperplane $H$ such that $H^{\sigma(i)}\supset S_i'\supset S_i$ for all $i\in\dom\sigma$. 

\end{proof}

Note that $(0,0)\in C_\sigma$ for all $\sigma\in\SS^n$, $\{(0,\lambda):\lambda\geq 0\}\subset C_\sigma$ iff $\Im \sigma\subset\{-1\}$ and $\{(0,\lambda):\lambda\leq 0\}\subset C_\sigma$ iff $\Im \sigma\subset\{+1\}$. So the sets $C_\sigma$ meet conditions 2., 3.\ and 4., but they do not meet condition 1. However, if we were to correct this by removing $\{(0,\lambda):\lambda\in\RR\}$ we would destroy the connectivity of the $C_\sigma$ and violate condition 2. Note that $X$ itself is only $n-2$-connected and not $n-1$-connected. So we have to use a different construction.

\paragraph{Definition and properties of the $C^*_\sigma$.}

For each $i\in[n]$, let $b_i$ be the barycenter of $S_i$. Let $H_{v_0,\lambda_0}$ be an oriented hyperplane containing all the $b_i$. This exists, because $b_1,\ldots,b_n$ are only $n$ points $\RR^n$. Define
$$C^*_\sigma:= C_\sigma\setminus \{(\beta v_0,\lambda): \lambda\in \RR,\beta\geq 0\}.$$
Note that $\{(\beta v_0,\lambda): \lambda\in \RR,\beta\geq 0\}\supset\{(0,\lambda):\lambda\in\RR\}$ and hence $C^*_\sigma\subset X$, so these sets fulfill condition 1. As $C^*_\sigma\subset C_\sigma$, condition 4.\ still holds. As we removed the same set from all the $C_\sigma$, we also have condition 3. What is not immediately clear, is whether 2.\ holds. We are going to show that the $C^*_\sigma$ are simply connected, and hence $(n-|\dom \sigma|-1)$-connected as required.

\begin{lemma}
For any $\sigma\in\SS^n$ the set $C^*_\sigma$ is simply connected.
\end{lemma}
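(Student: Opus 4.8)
The plan is to analyze the structure of $C^*_\sigma = C_\sigma \setminus L$ where $L = \{(\beta v_0, \lambda) : \lambda \in \RR, \beta \geq 0\}$, and show that removing this "half-plane slice" from the polyhedral cone $C_\sigma$ leaves something simply connected. The key geometric observation is that $L$ is a two-dimensional half-plane (the product of the ray $\RR_{\geq 0} v_0$ with the $\lambda$-line), and that $C_\sigma$ is a convex cone of full dimension $n+1$ (it contains a $(v,\lambda)$ with $v \neq 0$ by the previous lemma, together with $(0,0)$ and a neighborhood argument, or at least it has nonempty interior since it is defined by finitely many non-strict inequalities and contains an interior point coming from a strictly separating hyperplane). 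So we are removing a set of codimension $n-1 \geq 2$... but that alone is not enough when $n = 2$, where $L$ has codimension $1$; the point is that $L$ does not disconnect $C_\sigma$ because it only contains the \emph{nonnegative} multiples of $v_0$, not all of them.

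First I would set up coordinates: write points of $X$ as $(v,\lambda)$ and consider the obvious deformation retraction / star-shapedness structure. The natural approach is to exhibit an explicit homotopy. Since $C_\sigma$ is a convex cone, pick a point $(v_*, \lambda_*) \in C_\sigma$ with $v_* \neq 0$ and $v_*$ not a nonnegative multiple of $v_0$ — this is possible because $C_\sigma$ is full-dimensional (or at least not contained in the plane $\aspan\{(v_0,0),(0,1)\}$, using that it contains a point with $v \neq 0$ from a strictly separating hyperplane; one can perturb within the cone to avoid the ray $\RR_{\geq 0}v_0$). Then I claim $C^*_\sigma$ deformation retracts onto... actually the cleaner route: show that $C^*_\sigma$ is star-shaped with respect to $(v_*, \lambda_*)$ after a homotopy, or show directly that any loop in $C^*_\sigma$ is contractible by first pushing it into the "good" part of the cone. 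Concretely, for any $(v,\lambda) \in C^*_\sigma$, the straight segment to $(v_*,\lambda_*)$ stays in $C_\sigma$ by convexity; I need it to avoid $L$. A segment between two points can meet the half-plane $L$ only if it crosses it, and since $v_* \notin \RR_{\geq 0}v_0$, the segment leaves $L$ immediately; but an intermediate point could still land in $L$. To handle this I would instead use a two-stage homotopy: first scale $(v,\lambda) \mapsto (v, t\lambda + (1-t)\ell(v))$ is not obviously in the cone, so more care is needed.

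The robust plan: use the cone structure. $C_\sigma$ is a cone, so $C_\sigma = \RR_{\geq 0} \cdot K$ for a convex "base" slice; radial scaling toward the origin is a homotopy within $C_\sigma$, but the origin lies in $L$, so that fails too. Instead, fix the direction $v_* \notin \RR_{\geq 0} v_0 \cup \{0\}$ and define the homotopy $h_t(v,\lambda) = (1-t)(v,\lambda) + t(v + \epsilon v_*, \lambda + \epsilon \lambda_*)$ for suitable small $\epsilon$ — no, simpler: since $C^*_\sigma$ is $C_\sigma$ minus a convex set $L$, and $C_\sigma$ is convex, a standard fact is that $C_\sigma \setminus L$ is simply connected provided $L$ has codimension $\geq 2$ in $\aspan C_\sigma$ \emph{or} provided $L$ is "on the boundary" / does not separate. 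I would prove the relevant statement by hand: parametrize and show $C^*_\sigma$ is path-connected (clear: connect via points with $v \neq \beta v_0$, using that the set of such $(v,\lambda)$ in $C_\sigma$ is convex after intersecting with the half-space, hence connected, and every point of $C^*_\sigma$ connects to it), and then show $\pi_1 = 0$ by noting that $C^*_\sigma$ deformation retracts onto $C_\sigma \cap \{v \notin \RR_{\geq 0} v_0\}$, which is an intersection of the convex set $C_\sigma$ with the \emph{complement of a convex set} — hmm.

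Cleanest: I would show $C^*_\sigma$ is homeomorphic to, or deformation retracts onto, a convex set. Consider the map pushing along $v_0$: define $\Phi_t(v,\lambda) = (v - t\langle \text{something}\rangle v_0, \lambda)$. Actually here is the intended argument, which I expect the authors use: $L$ is a relatively closed convex subset of $C_\sigma$ lying in the boundary-direction; choose a hyperplane-like homotopy that "opens up" the slit. Since $v_0$ is a single direction and $L = \RR_{\geq 0}v_0 \times \RR$, one can pick a linear functional $\phi$ on $\RR^n$ with $\phi(v_0) > 0$ and deformation-retract $C_\sigma$ onto $C_\sigma \cap \{\phi(v) \leq 0\}$ by the straight-line homotopy $(v,\lambda,t) \mapsto$ move $v$ toward the halfspace — but this need not preserve membership in the cone $C_\sigma$. \textbf{The main obstacle} is exactly this: ensuring the contracting/retracting homotopy stays inside the non-convex set $C^*_\sigma$, i.e., reconciling "avoid the slit $L$" with "stay in the cone $C_\sigma$". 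I would resolve it by exploiting that $C_\sigma$ is a cone invariant under adding any element of its recession cone: since $C_\sigma$ contains a point $(v_*,\lambda_*)$ with $v_* \neq 0$ not parallel to $v_0$ and $C_\sigma$ is a cone, $C_\sigma$ is invariant under $(v,\lambda) \mapsto (v,\lambda) + s(v_*,\lambda_*)$ for $s \geq 0$; the homotopy $h_s(v,\lambda) = (v,\lambda) + s(v_*,\lambda_*)$ moves every point off the slit $L$ for all $s > 0$ (because $v + s v_* \in \RR_{\geq 0}v_0$ for at most one value of $s$, generically none, since $v_*\notin \aspan\{v_0\}$ would even make it impossible; if $v_* $ is antiparallel... we chose it not to be), giving a homotopy from $\mathrm{id}_{C^*_\sigma}$ into the region $\{s \geq s_0\}$ which is then visibly convex (it equals $(s_0 v_*, s_0\lambda_*) + C_\sigma \cap (\text{complement of } L)$ — and for large enough translation in a fixed non-$v_0$ direction the translate of any bounded loop avoids $L$ entirely and sits in a convex subset of $C_\sigma \setminus L$). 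Then a bounded loop, pushed far in direction $v_*$, bounds a disk in that convex region; composing with the homotopy contracts it in $C^*_\sigma$. Writing this carefully — choosing $v_*$, checking $v + sv_* \notin \RR_{\geq 0}v_0$ for the relevant range, and assembling the two homotopies — is the bulk of the proof, and is the step I'd flag as requiring the most care.
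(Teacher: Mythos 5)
Your overall strategy --- translate in a fixed direction $(v_*,\lambda_*)$ to push a loop off the slit $L=\set{(\beta v_0,\lambda):\beta\geq 0}$, then contract it in a convex region --- breaks down at exactly the step you flag, and it cannot be repaired without a piece of structure your proposal never invokes: the particular \emph{choice} of $v_0$ as the normal of a hyperplane through all the barycenters $b_i$ of the $S_i$. The translation homotopy $h_s(v,\lambda)=(v,\lambda)+s(v_*,\lambda_*)$ does stay in $C_\sigma$ (a convex cone is closed under adding its own elements), but it need not stay in $C^*_\sigma$: if $v=\beta v_0-s_1v_*$ with $\beta\geq 0$ and $s_1>0$, then $(v,\lambda)$ may well lie in $C^*_\sigma$ and yet $h_{s_1}(v,\lambda)\in L$. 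Your parenthetical claim that $v_*\notin\aspan{v_0}$ makes this ``impossible'' is false, and ``at most one value of $s$'' does not help --- a homotopy that leaves the space at a single instant is not a homotopy in the space. Concretely, for $\sigma=\emptyset$ one has $C_\emptyset=\RR^{n+1}$, and the point $(v_0-v_*,0)$ lies in $C^*_\emptyset$ but is carried onto $L$ at time $s=1$. Ironically, the one choice you explicitly exclude, $v_*=-v_0$, is the one that works: $v-sv_0=\beta v_0$ forces $v=(\beta+s)v_0$ with $\beta+s>0$, i.e.\ $(v,\lambda)\in L$ already, a contradiction --- this is precisely the paper's computation.

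The paper's proof also makes a case split on $\Im\sigma$ that your proposal does not, and which is unavoidable. If $\sigma$ attains both signs, then no $(\beta v_0,\lambda)$ with $\beta>0$ lies in $C_\sigma$ at all: the choice of $v_0$ forces some barycenter $b_i$ onto the boundary of a closed half-space containing the full-dimensional $S_i$, a contradiction. Hence $C^*_\sigma=C_\sigma\cap X$, which is then shown to be convex, and no homotopy is needed. If instead $\Im\sigma\subset\set{+1}$ (or $\set{-1}$, or $\sigma=\emptyset$), boundedness of the $S_i$ supplies a point $(-v_0,\lambda')\in C^*_\sigma$, and the straight-line homotopy onto it stays in $C^*_\sigma$ by the computation above, so $C^*_\sigma$ is contractible. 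Note that $(-v_0,\lambda')\in C_\sigma$ is exactly what fails in the mixed-sign case, which is why the two cases genuinely require different arguments. Without the case distinction and without using the defining property of $v_0$, your argument cannot be completed as written.
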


\begin{proof}
\emph{Case 1: $|\Im\sigma|=2$.} 
We are going to show that $C^*_\sigma$ is non-empty and convex, which implies that $C^*_\sigma$ is simply connected.

We start out by showing that $C^*_\sigma=C_\sigma\cap X$. Suppose that $(\beta v_0,\lambda)\in C_\sigma$ for some $\beta>0$. Let, say, $\sigma(1)=+1$ and $\sigma(2)=-1$. Then $b_1\in S_1\subset H^+_{\beta v_0,\lambda}$ and $b_2\in S_2\subset H^-_{\beta v_0,\lambda}$, so $\lambda\leq\sprod{\beta v_0}{b_1}=\sprod{\beta v_0}{b_2}\leq \lambda$ where the equality holds by the choice of $v_0$.  So $b_1,b_2\in H_{\beta v_0,\lambda}$. But the $S_i$ are $n$-dimensional. So there exist $x_1,x_2\in S_1$ with $\sprod{\beta v_0}{x_1} < \lambda < \sprod{\beta v_0}{x_2}$, which is a contradiction to $S_1\subset H^+_{\beta v_0,\lambda}$.

We now know that $C^*_\sigma=C_\sigma\cap X$ and as we have already seen, $C_\sigma$ contains a pair $(v,\lambda)$ such that $v\not=0$, so we can conclude that $(v,\lambda)\in C^*\sigma$. Also we know that $C_\sigma$ is convex and we want to show that $C^*_\sigma$ is convex as well. It suffices to argue that there are no $v,\lambda,\lambda'$ such that both $(v,\lambda)$ and $(-v,\lambda')$ are in $C_\sigma$. Assume to the contrary that this is the case. Then 
$$H^+_{v,\lambda}\supset S_1 \subset H^+_{-v,\lambda'} \text{ and }
H^-_{v,\lambda}\supset S_2 \subset H^-_{-v,\lambda'}.
$$
But 
$$H^+_{-v,\lambda'}=H^-_{v,-\lambda'} \text{ and } H^-_{-v,\lambda'}=H^+_{v,-\lambda'},$$
so
$$S_1\subset H^+_{v,\lambda} \cap H^-_{v,-\lambda'} \text{ and }
S_2 \subset H^-_{v,\lambda} \cap H^+_{v,-\lambda'},$$
and hence
$$\exists s_1\in S_1: \lambda \leq \sprod{v}{s_1} \leq -\lambda' \text{ and }
\exists s_2\in S_2: -\lambda' \leq \sprod{v}{s_1} \leq \lambda.$$
We conclude that $\lambda= -\lambda'$ which implies
$$S_1\subset H^+_{v,\lambda} \cap H^-_{v,\lambda}=H_{v,\lambda} \text{ and }
S_2 \subset H^-_{v,\lambda} \cap H^+_{v,\lambda}=H_{v,\lambda}.$$
But the $S_i$ are full dimensional and cannot be contained in a hyperplane, which is a contradiction.

\emph{Case 2: $|\Im\sigma|\leq 1$.} 
Without loss of generality, we assume that $\Im\sigma\subset\{+1\}$. $\sigma$ may be empty. For $-v_0$ there exists a $\lambda'$ such that $H^+_{-v_0,\lambda'}\supset S_i$ for all $i$, because the $S_i$ are bounded. This shows that $C^*_\sigma$ is non-empty. We are going to give a deformation retraction of $C^*_\sigma$ to $(-v_0,\lambda')$. This means that we are looking for a homotopy $h_t:C^*_\sigma\rar C^*_\sigma$ such that $h_0=\text{id}$ and $h_1$ is the map with $h_1(v,\lambda) = (-v_0,\lambda')$ for all $(v,\lambda)$. We are going to use the straight line homotopy, defined by
$$h_t(v,\lambda)=t(-v_0,\lambda') + (1-t)(v,\lambda).$$
$h_t$ is continuous, $h_0(v,\lambda)=(v,\lambda)$ and $h_1(v,\lambda)=(-v_0,\lambda')$ as desired. We still have to show that $\Im h_t\subset C^*_\sigma$ for all $t$. $C_\sigma$ is a cone and hence convex, so all we have to argue is that no $(\beta v_0,\gamma)$ with $\beta \geq 0$ and $\gamma\in\RR$ lies in $\Im h_t$ for any $t$. We have already seen that $\Im h_0=C^*_\sigma$ and $\Im h_1=\{(-v_0,\lambda')\}$, so we only have to consider $0<t<1$.   Suppose $h_t(v,\lambda)=(\beta v_0,\gamma)$ for such $\beta$ and $\gamma$ for some $t\in(0,1)$. Then
$$-tv_0+(1-t)v=\beta v_0$$
which we can solve for $v$ to obtain
$$v=\frac{\beta+t}{1-t}v_0.$$
We have thus written $v$ as a non-negative multiple of $v_0$. But then $(v,\lambda)\not\in C^*_\sigma$. So $C^*_\sigma$ is contractible and hence simply connected.
\end{proof}

We have now seen that the $C^*_\sigma$ also meet condition 2.\ and hence all the requirements of lemma \ref{lem:key-lemma}. Applying lemma \ref{lem:key-lemma} gives us the desired $(v,\lambda)\in\RR^{n+1}$ with $v\not=0$ such that $p(v,\lambda)=0$. This concludes the proof of theorem \ref{thm:main-result}.

\section{Existence of the $f_i$}
\label{sec:applications}

\begin{prop}
\label{prop:existence-key-lemma}
Let $\mu$ be a continuous probability measure and $\alpha\in[0,1]$.
If $S$ is a compact, connected set with $\mu(S)\geq \max\{\alpha,1-\alpha\}$, then there exists a function $f:S^{n-1}\rar\RR^n$ such that $\mu(H^+_{v,\sprod{v}{f(v)}})=\alpha$ for all $v\in S^{n-1}$ and $\Im f \subset S$.
\end{prop}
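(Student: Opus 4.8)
The plan is to define $f$ pointwise, using the axiom of choice to assemble the values; since Theorem~\ref{thm:main-result} does not require the $f_i$ to be continuous, I never need to worry about coherence across different $v$. So fix $v\in S^{n-1}$; the goal is to produce a point $x\in S$ with $\mu(H^+_{v,\sprod{v}{x}})=\alpha$.

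First I would study the function $g(\lambda):=\mu(H^+_{v,\lambda})$, which is continuous and non-increasing in $\lambda$. Because $S$ is compact, the linear functional $x\mapsto\sprod{v}{x}$ attains a minimum $m$ and a maximum $M$ on $S$. The idea is to show $g(m)\ge\alpha\ge g(M)$ and then invoke the intermediate value theorem. For the left inequality: $S\subset H^+_{v,m}$, so $g(m)=\mu(H^+_{v,m})\ge\mu(S)\ge\max\{\alpha,1-\alpha\}\ge\alpha$. For the right inequality: $S\subset H^-_{v,M}$, so $\mu(H^-_{v,M})\ge\mu(S)\ge 1-\alpha$; since $\mu$ is absolutely continuous with respect to Lebesgue measure the hyperplane $H_{v,M}$ is $\mu$-null, whence $g(M)=\mu(H^+_{v,M})=1-\mu(H^-_{v,M})\le\alpha$. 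The intermediate value theorem then yields a $\lambda^*\in[m,M]$ with $g(\lambda^*)=\alpha$.

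It remains to realize $\lambda^*$ as an inner product $\sprod{v}{x}$ with $x\in S$, and this is where connectedness enters. The image of the continuous map $x\mapsto\sprod{v}{x}$ on the connected set $S$ is a connected subset of $\RR$ containing both $m$ and $M$, hence equals $[m,M]\ni\lambda^*$; so I can pick $x(v)\in S$ with $\sprod{v}{x(v)}=\lambda^*$. Defining $f(v):=x(v)$ for every $v\in S^{n-1}$ then gives $\mu(H^+_{v,\sprod{v}{f(v)}})=g(\lambda^*)=\alpha$, and $\Im f\subset S$ holds by construction.

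I do not expect a genuine obstacle: the argument is two applications of the intermediate value theorem, bridged by compactness of $S$ (to obtain $m$ and $M$) and connectedness of $S$ (to hit every value of $\sprod{v}{\cdot}$ between them). The only points that warrant care are the degenerate cases $\alpha\in\{0,1\}$, where the hypothesis forces $\mu(S)=1$ and one should double-check that the chain of inequalities above still closes, and the appeal to absolute continuity of $\mu$: it is precisely $\mu(H_{v,M})=0$ that lets us pass from $\mu(H^+_{v,M})+\mu(H^-_{v,M})=1+\mu(H_{v,M})$ to the clean bound $g(M)=1-\mu(H^-_{v,M})\le\alpha$.
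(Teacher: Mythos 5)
Your proof is correct, and it reorganizes the argument in a way that is cleaner than the paper's. Both proofs are pointwise in $v$ and rest on the same three ingredients -- continuity of $\lambda\mapsto\mu(H^+_{v,\lambda})$, compactness of $S$, and connectedness of $S$ -- but the decompositions differ. The paper first produces \emph{some} $\lambda_0$ with $\mu(H^+_{v,\lambda_0})=\alpha$ anywhere in space and then splits into three cases according to whether $H^+_{v,\lambda_0}\supset S$, $H^-_{v,\lambda_0}\supset S$, or neither: in the first two cases the hyperplane must be slid until it touches $S$ (using compactness and a second continuity argument, and forcing $\alpha=\tfrac12$), and only the third case uses connectedness; a preliminary reduction to $\alpha\le\tfrac12$ via $f'(v)=f(-v)$ is also needed. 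You instead restrict attention from the outset to the slab $[m,M]$ of values of $\sprod{v}{\cdot}$ attained on $S$ and verify the two endpoint inequalities $\mu(H^+_{v,m})\ge\alpha\ge\mu(H^+_{v,M})$ directly from the hypothesis $\mu(S)\ge\max\{\alpha,1-\alpha\}$ (the upper bound correctly exploiting $\mu(H_{v,M})=0$ by absolute continuity). This localizes the intermediate value theorem so that the resulting $\lambda^*$ automatically lies in $[m,M]$, and connectedness then guarantees $\lambda^*=\sprod{v}{x}$ for some $x\in S$. The payoff is that the case analysis, the sliding argument, and the $\alpha\leftrightarrow 1-\alpha$ symmetry reduction all disappear, and the degenerate cases $\alpha\in\{0,1\}$ are absorbed without special treatment. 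No gaps.
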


\begin{proof}
First of all, note that we can assume $\alpha\leq\frac{1}{2}$ without loss of generality. By definition a set $S$ with the given properties exists for a parameter $\alpha$ if and only if it exists for a parameter $1-\alpha$. Given an auxiliary function $f$ with $\mu(H^+_{v,\sprod{v}{f(v)}})=\alpha$ for all $v\in S^{n-1}$ and $\Im f \subset S$, we can obtain an auxiliary function $f'$ with $\mu(H^+_{v,\sprod{v}{f'(v)}})=1-\alpha$ for all $v\in S^{n-1}$ and $\Im f' \subset S$ by putting $f'(v)=f(-v)$. Therefore, let $\alpha\leq \frac{1}{2} \leq 1-\alpha$.

Let $v\in S^{n-1}$. If we can show that there exists a hyperplane $H_{v,\lambda}$ with $\mu(H^+_{v,\lambda})=\alpha$ and $H_{v,\lambda}\cap S\not=\emptyset$ we are done. $S$ is compact and hence bounded. So there exist hyperplanes $H_{v,\lambda_+}$ and $H_{v,\lambda_-}$ such that $\mu(H^+_{v,\lambda_+})\geq 1-\alpha \geq \alpha \geq \mu(H^+_{v,\lambda_-})$. By continuity of $\mu$ there exists a hyperplane $H_{v,\lambda_0}$ with $\mu(H^+_{v,\lambda_0})=\alpha$. Now we distinguish three cases.

\emph{Case 1: $H^+_{v,\lambda_0}\supset S$.} Then $\alpha=\mu(H^+_{v,\lambda_0})\geq\mu(S)=1-\alpha$ and hence $\alpha=1-\alpha$ as well as $\mu(H^+_{v,\lambda_0})=\mu(S)$. Put $\lambda=\sup\{\lambda'\geq\lambda_0:H^+_{v,\lambda'}\supset S\}$. As $S$ is compact $H_{v,\lambda}\cap S\not=\emptyset$. $$\alpha=\mu(H^+_{v,\lambda_0})\geq\mu(H^+_{v,\lambda'})\geq \mu(S) = 1-\alpha =\alpha$$ 
for every $\lambda'\geq\lambda_0$ with $H^+_{v,\lambda'}\supset S$, so by continuity of $\mu$ we also have $\mu(H^+_{v,\lambda})=\alpha$.

\emph{Case 2: $H^-_{v,\lambda_0}\supset S$.} Then $1-\alpha=\mu(H^-_{v,\lambda_0})\geq\mu(S)=1-\alpha$ and hence $\mu(H^-_{v,\lambda_0})=\mu(S)$. Put $\lambda=\inf\{\lambda'\leq\lambda_0:H^-_{v,\lambda'}\supset S\}$. As $S$ is compact $H_{v,\lambda}\cap S\not=\emptyset$. $$\mu(H^-_{v,\lambda_0})\geq\mu(H^-_{v,\lambda'})\geq \mu(S) = 1-\alpha $$ 
for every $\lambda'\geq\lambda_0$ with $H^-_{v,\lambda'}\supset S$, so by continuity of $\mu$ we have $\mu(H^+_{v,\lambda})=1-\mu(H^-_{v,\lambda})=\alpha$.

\emph{Case 3: Neither inclusion holds.} Then $H^+_{v,\lambda_0}\cap S \not= \emptyset \not= H^-_{v,\lambda_0}\cap S$ and because $S$ is connected $H^-_{v,\lambda_0}\cap S\not = \emptyset$.
\end{proof}

If $0\not=\alpha\not=1$ or the support of $\mu$ is bounded, then a set $S$ as in proposition \ref{prop:existence-key-lemma} always exists. There are examples where the support of $\mu$ is unbounded such that for $\alpha\in\{0,1\}$ no function $f$ with $\mu(H^+_{v,\sprod{v}{f(v)}})=\alpha$ exists.

\begin{corollary}
\label{cor:S-set}
Let $\mu_1,\ldots,\mu_n$ be continuous probability measures on $\RR^n$. Let $\alpha_1,\ldots,\linebreak \alpha_n\in[0,1]$. Let $S_1,\ldots,S_n\subset\RR^n$ have one of the following two properties:
\begin{enumerate}
\item $S_1,\ldots,S_n$ are compact, can be separated and $$\mu_i(S_i)\geq\max\{\alpha_i,1-\alpha_i\} \text{ for all $i\in[n]$.}$$
\item $S_1,\ldots, S_n$ are closed, can be separated and
$$\mu_i(S_i)>\max\{\alpha_i,1-\alpha_i\} \text{ for all $i\in[n]$.}$$
\end{enumerate}
Then there exists a hyperplane $H$ such that $\mu_i(H^+)=\alpha_i$ for all $i\in[n]$.
\end{corollary}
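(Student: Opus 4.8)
The plan is to deduce this corollary by feeding Proposition~\ref{prop:existence-key-lemma} into Theorem~\ref{thm:main-result}; the substantive point is only to arrange that the hypotheses of the former hold and that the resulting images are bounded and separated.

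I would first reduce case~2 to case~1. Assume $S_1,\ldots,S_n$ are closed, can be separated, and $\mu_i(S_i)>\max\{\alpha_i,1-\alpha_i\}$ for all $i$. For $R>0$ let $B_R$ denote the closed ball of radius $R$ about the origin and put $K_i^R:=S_i\cap B_R$, which is compact. Since $S_i=\bigcup_{R>0}K_i^R$ is an increasing union, continuity of $\mu_i$ from below gives $\lim_{R\to\infty}\mu_i(K_i^R)=\mu_i(S_i)$, so for $R$ large enough — and a single $R$ suffices for all $n$ indices at once — we have $\mu_i(K_i^R)>\max\{\alpha_i,1-\alpha_i\}$ for every $i$. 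Moreover, because each $K_i^R\subset S_i$ and the $S_i$ can be separated, the $K_i^R$ can be separated as well: for any sign pattern $\sigma$ the hyperplane $H$ separating the $S_i$ accordingly also satisfies $\interior{H^{\sigma(i)}}\supset S_i\supset K_i^R$. Hence $K_1^R,\ldots,K_n^R$ satisfy the hypotheses of case~1, and it is enough to prove the corollary under those hypotheses.

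So assume $S_1,\ldots,S_n$ are compact, can be separated, and $\mu_i(S_i)\geq\max\{\alpha_i,1-\alpha_i\}$. To invoke Proposition~\ref{prop:existence-key-lemma} I need connected sets, so I pass to the convex hulls $\conv S_i$: these are again compact (convex hull of a compact set in $\RR^n$ is compact), they are connected, by part~1 of the proposition in Section~\ref{sec:separation} they can still be separated, and $\mu_i(\conv S_i)\geq\mu_i(S_i)\geq\max\{\alpha_i,1-\alpha_i\}$. Thus Proposition~\ref{prop:existence-key-lemma} applies to each triple $(\mu_i,\alpha_i,\conv S_i)$ and yields a function $f_i:S^{n-1}\rar\RR^n$ with $\mu_i(H^+_{v,\sprod{v}{f_i(v)}})=\alpha_i$ for all $v\in S^{n-1}$ and $\Im f_i\subset\conv S_i$. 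Each $\Im f_i$ is then bounded (it sits inside the compact set $\conv S_i$), and $\Im f_1,\ldots,\Im f_n$ can be separated, being subsets of the separated sets $\conv S_1,\ldots,\conv S_n$. Theorem~\ref{thm:main-result} now provides a hyperplane $H$ with $\mu_i(H^+)=\alpha_i$ for all $i$, which completes the proof. The only step that calls for any care is the inner approximation in case~2 (where the strict inequality is exactly what is used); everything else is routine bookkeeping with the already-established facts about separability and convex hulls.
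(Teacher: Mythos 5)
Your argument is correct and is essentially the paper's own proof: reduce case~2 to case~1 by intersecting with a large ball (using the strict inequality), then in case~1 pass to convex hulls to get connectedness, and feed Proposition~\ref{prop:existence-key-lemma} into Theorem~\ref{thm:main-result}. The only difference is the order in which the two cases are treated, which is immaterial.
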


\begin{proof}
Suppose $S_1,\ldots, S_n$ are compact, can be separated and $\mu_i(S_i)\geq\max\{\alpha_i,\linebreak 1-\alpha_i\}$ for all $i\in[n]$. We first pass to the respective convex hulls $S_i'=\conv(S_i)$ which are compact, convex and hence connected, can be separated and have $\mu_i(S'_i)\geq\max\{\alpha_i,1-\alpha_i\}$. Applying \ref{prop:existence-key-lemma} and \ref{thm:main-result} shows that condition 1.\ suffices for the existence of the desired hyperplane.

If $S_1,\ldots, S_n$ are closed, can be separated and
$\mu_i(S_i)>\max\{\alpha_i,1-\alpha_i\}$ for all $i$, then for every $i$ there exists a ball $B_i$ such that $S_i':=S_i\cap B_i$ is compact and $\mu_i(S_i')>\max\{\alpha_i,1-\alpha_i\}$. $S_1',\ldots, S_n'$ can be separated. Using the sufficiency of 1.\ we obtain that 2.\ is sufficient for the existence of the desired hyperplane.
\end{proof}

Let $S$ denote the closure of the support of $\mu$. If $S$ is bounded, then $S$ fulfills the conditions of corollary \ref{cor:S-set} for any choice of $\alpha$. This immediately gives rise to the following result.

\begin{corollary}
\label{cor:support}
Let $\mu_1,\ldots,\mu_n$ be continuous probability measures  on $\RR^n$. Let $S_1,\ldots,\linebreak S_n$ denote the closure of their respective support.

If $S_1,\ldots, S_n$ are bounded and can be separated, then for any ratios $\alpha_1,\ldots,\alpha_n\in[0,1]$, there exists a hyperplane $H$ such that $\mu_i(H^+)=\alpha_i$ for all $i\in[n]$.
\end{corollary}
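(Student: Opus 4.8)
The plan is to derive this directly from Corollary \ref{cor:S-set}, part 1, applied to the sets $S_1,\ldots,S_n$ themselves. Three conditions must be checked. First, each $S_i$ is compact: it is closed by construction (being the closure of the support of $\mu_i$) and bounded by hypothesis, so closed and bounded in $\RR^n$ gives compactness. Second, the $S_i$ can be separated by hyperplanes, which is exactly the hypothesis of the corollary. Third, and this is the only point requiring a word of justification, we need $\mu_i(S_i)\geq\max\{\alpha_i,1-\alpha_i\}$ for all $i$.

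For the third point I would argue that $\mu_i(S_i)=1$ for every $i$. By the definition adopted in Section \ref{sec:definitions}, $\mu_i$ comes with a fixed Borel-measurable density $h_i$ with $\mu_i(A)=\int_A h_i\,dL$, the support is the set where $h_i\neq 0$, and $S_i$ is its closure, hence contains the support. Since $h_i$ vanishes off the support, $\mu_i(S_i)=\int_{S_i}h_i\,dL=\int_{\RR^n}h_i\,dL=\mu_i(\RR^n)=1$. For any $\alpha_i\in[0,1]$ we trivially have $\max\{\alpha_i,1-\alpha_i\}\leq 1=\mu_i(S_i)$, so the inequality holds.

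With all three conditions verified, Corollary \ref{cor:S-set} (part 1) yields a hyperplane $H$ with $\mu_i(H^+)=\alpha_i$ for all $i\in[n]$, which is the assertion.

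There is essentially no obstacle here: the statement is an immediate specialization of Corollary \ref{cor:S-set}. The only thing one must not overlook is that the phrase ``closure of the support'' guarantees $\mu_i(S_i)=1$ (rather than merely $\mu_i(S_i)\geq$ something), and that boundedness is what upgrades the closed set $S_i$ to a compact one so that part 1 of the corollary—rather than the strict-inequality version in part 2—applies for the full range $\alpha_i\in[0,1]$, including the endpoints $0$ and $1$.
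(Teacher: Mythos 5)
Your proof is correct and is essentially the paper's own argument: the paper likewise observes that the closure of a bounded support is compact with measure $1$, hence satisfies condition 1 of Corollary \ref{cor:S-set} for every $\alpha$, and the result follows immediately. Your added justification that $\mu_i(S_i)=1$ via the fixed density $h_i$ is a fine (and slightly more explicit) version of what the paper leaves implicit.
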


As mentioned in the introduction \cite{stoj1} contains a remark implying \ref{cor:support}. However we were not able to find a rigorous proof of \ref{cor:support} in full generality anywhere in the literature. Our method of proof differs from the one implied by the algorithm from \cite{stoj1}.

\section{Partitioning one Mass by two Lines}
\label{sec:two-lines}

It is an easy corollary of the ham sandwich theorem in the plane, that any continuous probability measure in the plane can be partitioned by two lines into four parts of measure $\frac{1}{4}$. In just the same way corollary \ref{cor:S-set} implies the following proposition.

\begin{prop}
For any continuous probability measure $\mu$ on $\RR^n$ and any $\alpha_1,\ldots,\linebreak\alpha_4\in(0,1)$ with $\sum_i \alpha_i=1$, there exist two oriented hyperplanes $H_1,H_2$ such that $H_1^+\cap H_2^+$, $H_1^-\cap H_2^+$, $H_1^+\cap H_2^-$, $H_1^-\cap H_2^-$ have $\mu$-measure $\alpha_1,\ldots,\alpha_4$, respectively.
\end{prop}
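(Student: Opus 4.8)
The plan is to reduce the statement to the classical planar situation and then feed it to corollary~\ref{cor:S-set}. First I would pass from $\RR^n$ to $\RR^2$. Let $\pi:\RR^n\rar\RR^2$ be the projection onto the first two coordinates and set $\nu:=\pi_*\mu$. Since the $\pi$-preimage of an $L_2$-null set is $L_n$-null, a routine Fubini argument shows that $\nu$ is again a continuous probability measure, this time on $\RR^2$. The useful feature of $\pi$ is that it transports lines to hyperplanes compatibly: an oriented line $\ell=H_{u,c}$ in $\RR^2$ with $u=(u_1,u_2)$ pulls back to the hyperplane $\pi^{-1}(\ell)=H_{(u_1,u_2,0,\dots,0),c}$ in $\RR^n$, carrying $\ell^{\pm}$ to the corresponding half-spaces. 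Hence, given oriented lines $\ell_1,\ell_2$ in $\RR^2$ and the associated hyperplanes $H_1,H_2$ in $\RR^n$, each of the four sets $H_1^{s}\cap H_2^{t}$ is the $\pi$-preimage of the matching planar quadrant, so $\mu(H_1^{s}\cap H_2^{t})=\nu(\ell_1^{s}\cap\ell_2^{t})$ for all signs $s,t$. Thus it suffices to find oriented lines $\ell_1,\ell_2$ in $\RR^2$ whose four quadrants carry $\nu$-mass $\alpha_1,\dots,\alpha_4$ in the prescribed arrangement.

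Next, imitating the classical two-line argument, I would make the first cut. Because $c\mapsto\nu(\{y_1\geq c\})$ is continuous and $\alpha_1+\alpha_3=1-\alpha_2-\alpha_4\in(0,1)$, there is a $\lambda$ with $\nu(\{y_1\geq\lambda\})=\alpha_1+\alpha_3$; put $\ell_1:=\{y_1=\lambda\}$ oriented so that $\ell_1^+=\{y_1\geq\lambda\}$. Let $\nu_R$ and $\nu_L$ be $\nu$ restricted to $\ell_1^+$ respectively $\ell_1^-$, renormalized to probability measures; both are continuous probability measures on $\RR^2$, with $\nu_R(\{y_1>\lambda\})=1$ and $\nu_L(\{y_1<\lambda\})=1$ (using $\nu(\ell_1)=0$). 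Set $r_R:=\alpha_1/(\alpha_1+\alpha_3)$ and $r_L:=\alpha_2/(\alpha_2+\alpha_4)$, both in $(0,1)$. It now suffices to find an oriented line $\ell_2$ with $\nu_R(\ell_2^+)=r_R$ and $\nu_L(\ell_2^+)=r_L$: indeed then $\nu(\ell_1^+\cap\ell_2^+)=(\alpha_1+\alpha_3)r_R=\alpha_1$, $\nu(\ell_1^+\cap\ell_2^-)=(\alpha_1+\alpha_3)-\alpha_1=\alpha_3$, and symmetrically the $\ell_1^-$ part of $\nu$ is split into $\alpha_2$ and $\alpha_4$.

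To produce $\ell_2$ I would invoke corollary~\ref{cor:S-set} in dimension $2$, applied to $\nu_R,\nu_L$ with ratios $r_R,r_L$. For small $\epsilon>0$ and large $M$ take the compact boxes $S_R:=[\lambda+\epsilon,M]\times[-M,M]$ and $S_L:=[-M,\lambda-\epsilon]\times[-M,M]$. These can be separated: the two sign patterns that actually separate the two indices are realized by the line $\{y_1=\lambda\}$, since $S_R$ and $S_L$ lie in its two disjoint open half-planes, while the two same-side patterns are trivial because both boxes are bounded. Moreover $\nu_R(S_R)$ can be brought arbitrarily close to $1$ by letting $\epsilon\to0$ and $M\to\infty$ (as $\nu_R(\{y_1>\lambda\})=1$), so for suitable $\epsilon,M$ we get $\nu_R(S_R)>\max\{r_R,1-r_R\}$ and likewise $\nu_L(S_L)>\max\{r_L,1-r_L\}$. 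So condition~1 of corollary~\ref{cor:S-set} is met, $\ell_2$ exists, and pulling $\ell_1,\ell_2$ back through $\pi$ yields the desired $H_1,H_2$ in $\RR^n$.

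The one real obstacle — and the place where this departs from the symmetric ham-sandwich case — is that corollary~\ref{cor:S-set} carries a separability hypothesis, so the witness boxes $S_R,S_L$ must be engineered rather than being handed to us by symmetry; this is possible precisely because, once $\ell_1$ is fixed, the residual measures $\nu_R$ and $\nu_L$ sit on opposite open sides of a hyperplane. The remaining steps are routine: the absolute continuity of $\pi_*\mu$, and keeping track of orientations so that the four quadrants receive $\alpha_1,\dots,\alpha_4$ in the stated order rather than a permutation of it.
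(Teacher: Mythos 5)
Your argument is essentially the paper's own proof: project to $\RR^2$, make one cut realizing the appropriate pairwise sum of the $\alpha_i$, renormalize the two restricted measures, and invoke corollary~\ref{cor:S-set} with witness sets sitting on opposite sides of the first line. The only differences are cosmetic (you group $\alpha_1+\alpha_3$ where the paper groups $\alpha_1+\alpha_2$, and you use compact boxes where the paper uses shifted closed half-planes), and your choice of bounded witness sets in fact verifies the separability condition more cleanly, since for compact sets the same-side sign patterns are realized by a far-away line.
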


\begin{proof}
Without loss of generality, we can assume $n=2$. For higher $n$ we pick an arbitrary projection down to $\RR^2$. Now we pick any normal $v$ and let $H_{v,\lambda}=:H_2$ denote a hyperplane with $\mu(H_{v,\lambda}^+)=\alpha_1+\alpha_2$. Now put $\mu_1(A)=\frac{1}{\alpha_1+\alpha_2}\mu (A\cap H^+_{v,\lambda})$ and $\mu_2(A)=\frac{1}{\alpha_3+\alpha_4}\mu (A\cap H^-_{v,\lambda})$. $\mu_1$ and $\mu_2$ are again continuous probability measures. Because all $\alpha_i$ are non-zero, there exist $\lambda_1>\lambda>\lambda_2$ such that $$\mu_1(H^+_{v,\lambda_1})>\max\{\frac{\alpha_1}{\alpha_1+\alpha_2},1-\frac{\alpha_1}{\alpha_1+\alpha_2}\}$$
and
$$\mu_2(H^-_{v,\lambda_2})>\max\{\frac{\alpha_3}{\alpha_3+\alpha_4},1-\frac{\alpha_3}{\alpha_3+\alpha_4}\}.$$
$H^+_{v,\lambda_1}$ and $H^-_{v,\lambda_2}$ are closed and they are separated by $H_{v,\lambda}$. Applying corollary \ref{cor:S-set} yields a hyperplane $H_1$ such that $\mu_1(H_1^+)=\frac{\alpha_1}{\alpha_1+\alpha_2}$ and $\mu_2(H_1^+)=\frac{\alpha_3}{\alpha_3+\alpha_4}$. By construction these two hyperplanes $H_1,H_2$ now have the desired properties.
\end{proof}

\pagebreak
\section{Central Spheres}
\label{sec:central-spheres}

The method given in section \ref{sec:applications} allows us to control $\Im f_i$, but it gives a lot of freedom for choosing the particular function $f_i$. What is a good canonical choice? We suggest what we call the central sphere, which we define below. First we need to recall some definitions. Given a density function $h$ on $\RR^k$, the \dfn{center of mass} of $h$ is defined as 
$$\int_{\RR^k} \text{id}\cdot h\ dL$$
where $L$ is the Lebesgue measure on $\RR^k$. Given a set $S\subset \RR^k$, we consider the affine subspace $A$ of minimal dimension that contains $S$. Let $L'$ denote the Lebesgue measure on $A$. The \dfn{centroid} of $S$ is defined as 
$$\frac{1}{L'(S)}\int_{S} id\ dL'$$
which we interpret as a point in $\RR^k$.

Let $\mu$ denote a continuous probability measure with bounded support $S$ and $h$ a fixed density function of $\mu$ (see section \ref{sec:definitions}). Let $\alpha\in[0,1]$. Let $S$ be a convex set that is compact and has $\mu(S)\geq\max\{\alpha,1-\alpha\}$ or that is closed and has $\mu(S)>\max\{\alpha,1-\alpha\}$. We now define a function $c:S^{n-1}\rar\RR^n$ which we call the \dfn{central sphere} of $\mu$, $h$, $S$ and $\alpha$ as follows. For each $v\in S^{n-1}$ there exists a hyperplane $H_{v,\lambda}$ with $\mu(H_{v,\lambda}^+)=\alpha$ and $H_{v,\lambda} \cap S\not=\emptyset$ as described in the proof of proposition \ref{prop:existence-key-lemma}. Let $\chi_S$ denote the characteristic function of $S$. If $(h\cdot \chi_S)|_{H_{v,\lambda}}=0$ almost everywhere with respect to the Lebesgue measure on $H_{v,\lambda}$, we define $c(v)$ to be the centroid of $H_{v,\lambda} \cap S$. Otherwise we define $c(v)$ to be the center of mass of $(h\cdot \chi_S)|_{H_{v,\lambda}}$.  With this definition $\mu(H^+_{v,\sprod{v}{c(v)}})=\alpha$ for all $v\in S^{n-1}$ and $\Im c\subset S$.

The central spheres are particular functions that can be used in the proof of corollary \ref{cor:S-set}. They give rise to a slightly stronger version of that corollary, in which we only need to require that the sets $\Im c_i$ can be separated, not the sets $S_i$. (Here $c_i$ denotes the central sphere of $\mu_i$, $h_i$, $S_i$ and $\alpha_i$.) Note that this makes only sense if we really fix a density function $h_i$ for each $\mu_i$ a priori: for each $\mu_1,\ldots,\mu_n$ there is a choice of density functions $h_1,\ldots,h_n$ such that if $\Im c_1,\ldots,\Im c_n$ can be separated, then $S_1,\ldots,S_n$ can be separated.

\begin{example}
Consider a regular 5-gon $P$ in the plane. Let $\mu(\Omega)$ be the area of $\Omega\cap P$. Let $h=\chi_P$. Let $S=\RR^2$. We consider the central spheres for $\alpha=\frac{1}{2}$ and for some small $\alpha>0$. Qualitative pictures of the corresponding central spheres are shown in figure \ref{fig:examples}, see also \cite{rote-05}. There are a couple of things to note. 1) In the case $\alpha=\frac{1}{2}$ the central sphere has Whitney index 4 (not 2), while in the case of small $\alpha$ the curve has Whitney index 1. 2) Even in the case $\alpha=\frac{1}{2}$ the hyperplane $H_{v,\lambda(v)}$ does not contain the centroid of $P$. So even in this case, the $f_i$ cannot be chosen to be constant. See \cite{bcj-01}. 3) The image of the central sphere is not in general the boundary of a convex set.
\end{example}

Some remarks about the continuity of $c$: The central sphere of $\mu,\alpha$ is not in general continuous, not even if the density $h$ of $\mu$ is smooth.  Moreover, even if we only define $c$ for those $v$ where $h|_H=0$ does not hold almost a.e., $c$ cannot be extended continuously to all $v$. An example for this is given in figure \ref{fig:non-cont}: On each of the three disks, the density of $\mu$ is an identical smooth cap. Now, if $\alpha=\frac{1}{3}$ and the normal $v$ points directly upwards, then $H:=H_{v,\lambda(v)}$ is as shown in the figure. On $H$ the density $h$ is identical $0$. Tilting $v$ slightly to the left by \emph{any} small amount, we always obtain a hyperplane that intersects $H$ in $p$. Tilting $v$ slightly to the right by \emph{any} small amount, we always obtain a hyperplane that intersects $H$ in $q$. No matter how we choose $c(v)$, we can never obtain a continuous function.

\begin{figure}[h]
\begin{center}
\includegraphics[width=6cm]{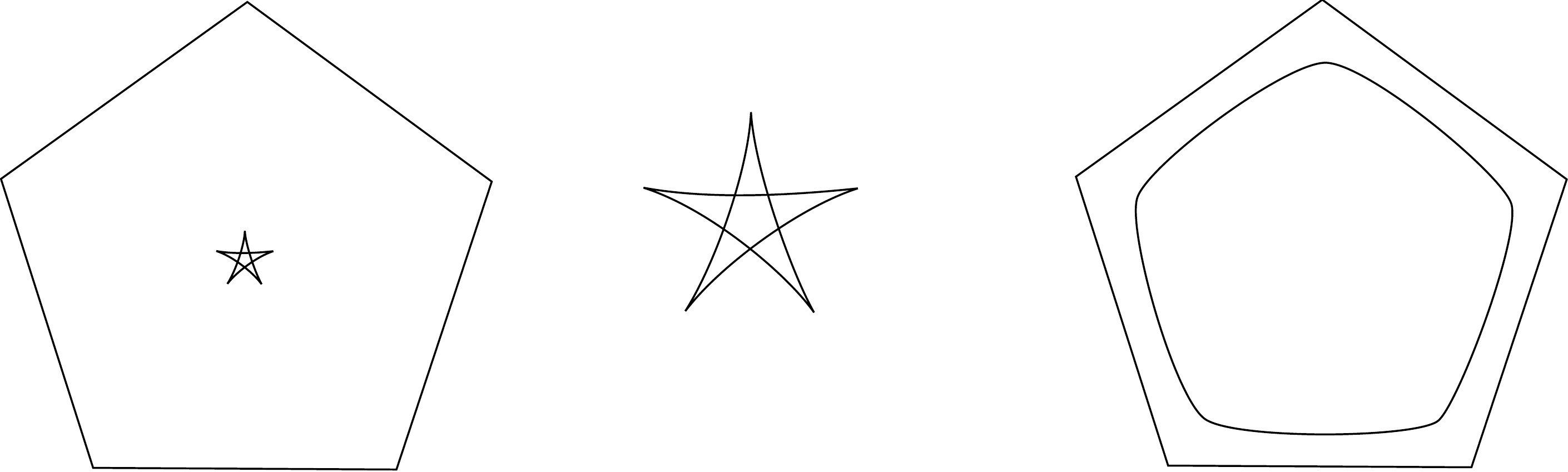}
\end{center}
\caption{\label{fig:examples}Central spheres of a 5-gon for $\alpha=\frac{1}{2}$ (with 5-gon and magnified) and small $\alpha>0$.}
\end{figure}

\begin{figure}[h]
\begin{center}
\includegraphics[width=8cm]{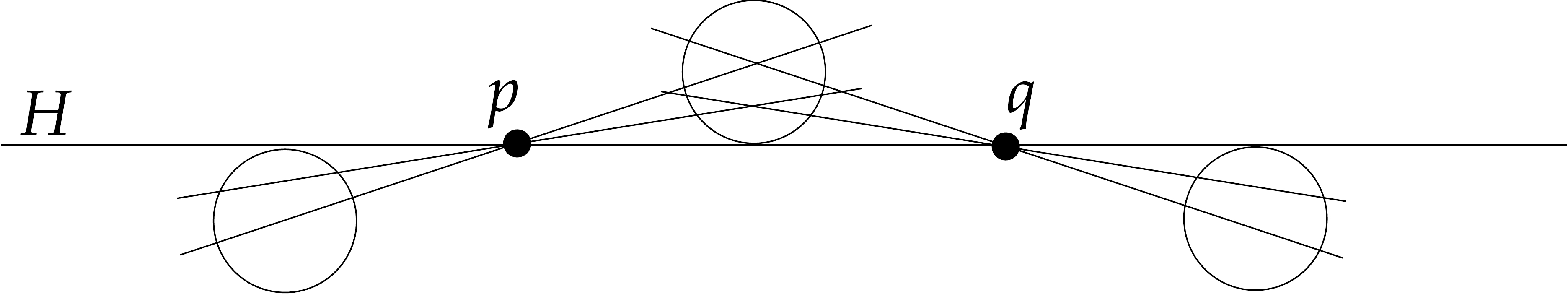}
\end{center}
\caption{\label{fig:non-cont}Central spheres are not continuous in general.}
\end{figure}

\end{document}